\documentclass{article}
\usepackage[utf8]{inputenc}
\usepackage{amsmath,amssymb,dsfont,stfloats,color,url,bbm}
\usepackage{subfigure}
\usepackage{dsfont}
\usepackage{cite}
\usepackage{xcolor}
\usepackage{pgfplots, pgfplotstable}

\allowdisplaybreaks

\usepackage{mathtools}
\usepackage{cite}
\usepackage[margin=1in]{geometry}
\usepackage{amsmath}
\usepackage{amssymb}
\usepackage{amsthm}
\usepackage{latexsym}
\usepackage{verbatim}
\usepackage{subfigure}
\usepackage{psfrag}
\usepackage{color}
\usepackage{enumitem}

\usepackage[utf8]{inputenc} 
\usepackage{amsfonts} 
\usepackage[T1]{fontenc}
\usepackage{url}              

\usepackage{hyperref}
\hypersetup{
	colorlinks=true,
	linkcolor=blue,
	filecolor=blue,
	citecolor = blue,      
	urlcolor=cyan,}

\usepackage{esdiff}
\usepackage{pgf,tikz,psfrag}
\usepackage{yfonts}

\newtheorem{theorem}{Theorem}
\newtheorem{proposition}{Proposition}
\newtheorem{lemma}{Lemma}

\newtheorem{assumption}{Assumption}

\newtheorem{definition}{Definition}
\newtheorem{remark}{Remark}

\def\mathlette#1#2{{\mathchoice{\mbox{#1$\displaystyle #2$}}%
{\mbox{#1$\textstyle #2$}}%
{\mbox{#1$\scriptstyle #2$}}%
{\mbox{#1$\scriptscriptstyle #2$}}}}
\newcommand{\matr}[1]{\mathlette{\boldmath}{#1}}

\providecommand{\GS}[2]{\mathcal{GS}(#1 \mid #2)}
\newcommand{\Op}[1]{\mathcal{O}(#1)}
\newcommand{\Opn}[1]{\mathcal{O}_\theta(#1)}

\newcommand{\psiv}{\matr\psi}

\providecommand{\norm}[1]{\lVert#1\rVert}
\newfont{\bbb}{msbm10 scaled 700}

\newfont{\bb}{msbm10 scaled 1100}

\newcommand{\PP}{\mbox{\bb P}}
\newcommand{\RR}{\mbox{\bb R}}

\newcommand{\NN}{\mbox{\bb N}}

\newcommand{\av}{{\bf a}}
\newcommand{\bv}{{\bf b}}

\newcommand{\ev}{{\bf e}}

\newcommand{\gv}{{\bf g}}

\newcommand{\qv}{{\bf q}}

\newcommand{\uv}{{\bf u}}

\newcommand{\vv}{{\bf v}}
\newcommand{\xv}{{\bf x}}

\newcommand{\zv}{{\bf z}}

\newcommand{\Am}{{\bf A}}
\newcommand{\Bm}{{\bf B}}
\newcommand{\Cm}{{\bf C}}

\newcommand{\Em}{{\bf E}}

\newcommand{\Gm}{{\bf G}}

\newcommand{\Id}{{\bf I}}

\newcommand{\Km}{{\bf K}}

\newcommand{\Om}{{\bf O}}
\newcommand{\Pm}{{\bf P}}

\newcommand{\Um}{{\bf U}}

\newcommand{\Xm}{{\bf X}}
\newcommand{\Ym}{{\bf Y}}

\newcommand{\gammav}{\hbox{\boldmath$\gamma$}}
\newcommand{\deltav}{\hbox{\boldmath$\delta$}}

\newcommand{\Deltam}{\hbox{\boldmath$\Delta$}}

\newcommand{\Psim}{\hbox{\boldmath$\Psi$}}

\newcommand{\diag}{{\hbox{diag}}}

\renewcommand{\Im}{{\rm Im}}

\usepackage{stmaryrd} 

\usepackage{graphicx} 
\usepackage{stmaryrd} 

\title{Inverse of a Sum of Random Matrices: \\A Dynamical Mean-Field Approach}
\author{{Burak~\c{C}akmak} and Manfred Opper \thanks{The authors contributed equally.}
\thanks{The work of B. \c{C}akmak was supported by the Gottfried Wilhelm Leibniz-Preis 2021 of DFG.}
\thanks{The authors are with Technical University of Berlin, Germany (e-mail: \{burak.cakmak, manfred.opper\}@tu-berlin.de).}}

\date{\today}

\begin{document}
\maketitle

\begin{abstract}
We study the inverse $(\Om^\top\Am\Om+\Bm)^{-1}$, where
$\Am,\Bm\in\mathbb R^{N\times N}$ are possibly random symmetric
positive definite matrices and $\Om$ is a Haar-distributed
orthogonal matrix. We assume that, as $N\to\infty$, the
empirical eigenvalue distributions of $\Am$ and $\Bm$ converge
almost surely to deterministic limits, denoted by
${\rm F}_{\Am}$ and ${\rm F}_{\Bm}$. We also assume that their
smallest eigenvalues remain bounded away from zero and their
largest eigenvalues remain bounded, almost surely, as
$N\to\infty$. We analyze the deterministic-equivalent-type
approximation
$(\Om^\top\Am\Om+\Bm)^{-1}\approx(\Bm-z^\star\Id_N)^{-1}$.

First, suppose that $\Om$ is independent of $(\Am,\Bm,\uv)$,
where $\uv\in\mathbb R^N$ satisfies $\|\uv\|=1$. There exists a sequence of standard Gaussian
vectors $\gv_N\in\RR^N$ independent of $(\Am,\Bm,\uv)$~such~that
\[
\lim_{N\to\infty}\left\Vert \,
(\Bm-z^\star\Id_N)
\left[
(\Om^\top \Am \Om + \Bm)^{-1}
-
(\Bm-z^\star\Id_N)^{-1}
\right]
(\Bm-z^\star\Id_N)\uv-
\sqrt{\frac \tau N}\,\gv_N\right\Vert\overset{\rm a.s.}{=}0\;.
\]
Here $z^\star\doteq-{\rm R}_{\Am}(-\chi)$ and $\tau\doteq{\rm R}_{\Am}'(-\chi)\bigl(1+\eta\,{\rm R}_{\Am}'(-\chi)\bigr)$ with  ${\rm R}_\Am$ denoting the
R-transform of ${\rm F}_{\Am}$ and ${\rm R}_{\Am}'$ its derivative. Writing ${\rm F}$ for the
free additive convolution of ${\rm F}_{\Am}$ and ${\rm F}_{\Bm}$,
we define
\[
(\chi,\eta)\doteq\int(x^{-1},x^{-2})\,{\rm dF}(x)\;.
\]

Second, consider a test matrix
\begin{equation}
\Km=\sum_{i=1}^{\lfloor N^q\rfloor}\lambda_i\uv_i\vv_i^\top,
\qquad q\in[0,C],\nonumber 
\end{equation}
where $C$ is fixed, $\|\uv_i\|=\|\vv_i\|=1$ and
$\limsup_{N\to\infty}\max_i|\lambda_i|\overset{\rm a.s.}{<}\infty$.
\emph{No orthogonality} is assumed among vectors in $\mathcal M\doteq
\{\uv_i,\vv_i\}_{i\leq \lfloor N^q\rfloor}$.
If $\Om$ is independent of $(\Am,\Bm,\mathcal M)$, then
\[
\lim_{N\to \infty}\frac1{N^q}
\operatorname{tr}\!\left(
\Km\Bigl[
(\Om^\top \Am \Om + \Bm)^{-1}
-
(\Bm-z^\star\Id_N)^{-1}
\Bigr]
\right)
\overset{\mathrm{a.s.}}{=}0\;.
\]
The cases $q=0$ and $q=1$ are the conventional anisotropic local and normalized-trace test cases;
the result also covers the intermediate range $q\in(0,1)$ and the extensive regime $q>1$.

We prove both results using a dynamical mean-field approach.
The generating functions of the free Appell polynomials
of $\Am$ and $\Bm$ define a linear discrete-time dynamics that
approximates the quantity of interest in the iterated large-$N$,
large-time limit. We analyze its finite-time fluctuations and
use the resulting covariance contraction to obtain the two laws.
\end{abstract}
\section{Introduction}
We address a deterministic-equivalent-type approximation~\cite{Hachem07} of the inverse of a sum of random matrices
\begin{equation}
(\Om^\top\Am\Om+\Bm)^{-1}\approx (\Bm-z^\star\Id_N)^{-1} \nonumber
\end{equation}
where $\Am\equiv \Am_N,\Bm\equiv \Bm_N\in\mathbb{R}^{N\times N}$ are
symmetric positive definite, possibly random, and $\Om\equiv \Om_N\in \RR^{N\times N}$ is Haar-distributed, independent of $\Am$ and $\Bm$. Note that this problem implicitly addresses the resolvent of the additive model $(\Om^\top\Am\Om+\Bm-z\Id_N)^{-1}$ evaluated at a spectral parameter $z\leq 0$.

Understanding such a deterministic equivalent approximation is not only a fundamental problem in random matrix theory \cite{kargin2015subordination, Alex14,knowles2017anisotropic,bao2017local}, but is also increasingly relevant to data science \cite{bun2017cleaning,couillet2022random}. As a motivating example, approximate Bayesian inference frequently relies on iterative algorithms that require $(\Om^\top\Am\Om+\Bm)^{-1}$, where $\Bm$ is typically diagonal and updated at every iteration \cite{williams2006gaussian}. Repeatedly inverting this matrix becomes a computational bottleneck for large $N$; replacing it with a deterministic-equivalent approximation allows this inversion to be bypassed altogether \cite{cakmak2018expectation}.

A simpler version of the problem was addressed in \cite[Theorem~2.17]{couillet2022random} and \cite{couillet2012random} for the case in which $\Am$ is a projection matrix, i.e., all nonzero eigenvalues of $\Am$ equal $1$.
In particular, in \cite{couillet2022random}, the Stein-method approach for Haar matrices \cite{pastur2011eigenvalue} is used to obtain deterministic equivalents for the two test cases as $N\to \infty$
\begin{align}
\frac{1}{N}{\rm tr}\left(\Km\left[(\Om^\top\Am\Om+\Bm)^{-1}- (\Bm-z^\star \Id_N)^{-1}\right]\right)&\overset{\rm a.s.}{\to}0 
\label{p1}\\
\vv^\top\left[(\Om^\top\Am\Om+\Bm)^{-1}-(\Bm-z^\star\Id_N)^{-1}\right]\uv&\overset{\rm a.s.}{\to }0 \;\label{p2}
\end{align}
for a test matrix $\Km $ with a bounded spectral norm and unit-norm vectors $\uv,\vv\in \RR^N$  which are all independent of $\Om$. 
The proof of \eqref{p1} (with $\Am$ a projection matrix) is a straightforward application of the Stein method \cite{pastur2011eigenvalue}. The proof of the latter is a considerably more involved application of the Stein method, as it requires additional control of higher-order moments; see \cite{couillet2022random} for details.

Beyond these test cases, it is natural to ask what happens when the test matrix
$\Km$ is a projection onto an $O(N^{c})$-dimensional subspace, $c<1$, so that
the quantity of interest becomes
\[
  \frac{1}{N^{c}}\operatorname{tr}\big(\Km(\Om^\top\Am\Om+\Bm)^{-1}\big),
\]
 or when $\Km$ takes the extensive form $\Km=\sum_{i=1}^{\lfloor N^{p'}\rfloor}\Km_i$ 
for some $p'>1$ where each $\Km_i$ is a projection matrix onto an $O(N^{c_i})$-dimensional subspace with $c_i\leq 1$. Both are captured by a unified formulation: we take
\[
  \Km=\sum_{i=1}^{\lfloor N^{q}\rfloor}\lambda_i\uv_i\vv_i^\top,
  \qquad q\in[0,C],
\]
with $C$ fixed and $\|\uv_i\|=\|\vv_i\|=1$, and we ask for the limit of
$N^{-q}\operatorname{tr}(\Km[\,\cdot\,])$. We stress that \emph{no orthogonality} is assumed among vectors in $\mathcal M\doteq
\{\uv_i,\vv_i\}_{i\leq \lfloor N^q\rfloor}$. Taking
$q=0$ recovers the anisotropic test case~\eqref{p2}, and $q=1$ the test
case~\eqref{p1}, while $q>1$ reaches the extensive regime.

While the Stein method might be useful for extending \eqref{p1} (or \eqref{p2}) to a general $\Am$, the aforementioned test formulation requires control of high-order moments. This indicates the need for \emph{non-asymptotic concentration inequalities} that directly target \eqref{p2}.

A deterministic equivalent estimate of \eqref{p2} is known as an \emph{anisotropic local law} \cite{knowles2017anisotropic}. Based on delicate resolvent-identity analyses, \cite{Alex14,knowles2017anisotropic}  provide non-asymptotic concentration inequalities for the \emph{anisotropic local law} when $\Am$ follows a Wigner or Marchenko--Pastur ensemble. For the additive Haar model considered here, however, 
existing local laws \cite{kargin2015subordination,bao2017local} control only the individual entries of the inverse corresponding to the special case where $\uv$ and $\vv$ are standard basis vectors (and also $\Am$ and $\Bm$ are treated deterministically).


\subsection{Proof Strategy: A Dynamical Mean-Field Argument}\label{sec:method}
In this paper, we take a novel, dynamical approach to this problem which does not rely
on the Stein method or resolvent identities.  In a first step, we will analyze the statistics of the random vector
\begin{equation}
\gammav\doteq\sqrt{N}\mathds{1}_{\mathcal E_N}\,
(\Bm-z_N^\star\Id_N)
\left[
(\Om^\top\Am\Om+\Bm-\epsilon^\star_N\Id_N)^{-1}-(\Bm-z_N^\star\Id_N)^{-1}
\right]
(\Bm-z_N^\star\Id_N)\uv  \label{gammav0}
\end{equation}
for a generic unit-norm vector $\uv$ independent of $\Om$.  Here, $\mathds{1}_{\mathcal E_N}\in\{0,1\}$ denotes the indicator of the event ${\mathcal E_N}\doteq \{\vert\epsilon_N^\star\vert\leq \sigma_{\min}(\Bm)\}$ with $\sigma_{\min}(\Bm)$ denoting the minimum singular value of $\Bm$. The explicit constructions of the scalars $z_N^\star$ and $\epsilon_N^\star$, which satisfy $z_N^\star\overset{\rm a.s.}{\to} z^\star$ and $\epsilon_N^\star \overset{\rm a.s.}{\to} 0$ as $N\to \infty$, are given later in the proof. The purpose of introducing the perturbed model with $\epsilon_N^\star$ is to obtain certain non-asymptotic concentration inequalities, which will play a key role in the proof. From the asymptotic statistics of $\gammav$, the limit of
$N^{-q}\operatorname{tr}(\Km[\,\cdot\,])$ will then be deduced in a second step. 

The main idea of our approach is to construct a temporal evolution of random vectors $\gammav^{(t)}$, $t=0,1,2,\dots$ that is devised to converge (as $t\to \infty$) to the desired vector $\gammav$
of \eqref{gammav0} and which can be analyzed in a simpler way. This approach has been motivated by methods from the statistical physics of disordered systems, where the properties of static or equilibrium states of a high-dimensional system with "frozen'' randomness are of interest.
The so-called dynamical mean-field method approaches this problem by first working out the asymptotic statistics of 
a corresponding dynamical model and then performing the large time limit to recover the static properties.
The success of this method relies on the often simple asymptotic decoupling of individual degrees of freedom (the components of random vectors) in the large $N$ limit. Applications of this approach range from models of spin glasses \cite{sompolinsky1981time,Eissfeller} and neural networks \cite{Mignacco_2021} to message-passing algorithms
\cite{opper2016theory,ccakmak2020dynamical} for high-dimensional models of statistical inference. While the original methods used in the physics literature may not be made rigorous in an easy way, recent mathematical work has justified the corresponding results in many cases, see e.g. \cite{Arous1998,Bolthausen,bayati2011dynamics,rangan2019vector,takeuchi2019rigorous,fan2022approximate,dudeja2023universality,bao2025leave}. 
In our work, we will rely on the "Householder Dice'' approach of \cite{lu2021householder} which has previously been applied, via a dynamical approach, to the statistical analysis of classification and communication-theoretic problems \cite{Cakmakisit24,cakmakit25,cakmak2026orthogonalapproximatemessagepassing}.

For our specific random matrix problem, the discrete-time dynamics designed to converge to \eqref{gammav0} is particularly simple, and in fact linear: starting from an appropriate random initialization $\gammav^{(0)}$, the iterates are given, for $t\geq 1$, by
\begin{align}
\gammav^{(t)}=\Om^\top \Em_{A}\Om\left[\Em_B\gammav^{(t-1)}+\frac{\sqrt N\, \uv}{\chi_N}\right]\;.
\end{align}
Here, $\Em_{A}\equiv \Em_A(\chi_N)$ and $\Em_{B}\equiv \Em_B(\chi_N)$ are the centered generating functions of the so-called \emph{free Appell polynomials} \cite{anshelevich2004appell} of $\Am$ and $\Bm$
evaluated at $\chi_N$. Namely, for a positive semi-definite matrix $\Xm\equiv \Xm_N\in \RR^{N\times N}$, we define the
centered generating function of its free Appell polynomials \cite{anshelevich2004appell} as
\begin{align}
\Em_{X}(s)\doteq \frac{1}{s}\left(\Xm-z_{\Xm}^N{(s)}\Id_N\right)^{-1}-\Id_N,\qquad 0<s<\frac{1}{N}{\rm tr}(\Xm^{-1})\;,
\end{align}
where $z_{\Xm}^N(s)$ is the functional inverse of the empirical Stieltjes transform $s_{\Xm}^N(z) \doteq \frac{1}{N}{\rm tr}\bigl((\Xm - z \Id_N)^{-1}\bigr)$ for $z<0$ and by convention we set $\frac{1}{N}{\rm tr}(\Xm^{-1})=\infty$ when $\Xm$ is singular.
Moreover, we introduce 
\[
\chi_N\doteq\frac{1}{N}{\rm tr}\bigl((\Om_{\rm new}^\top\Am\Om_{\rm new}+\Bm)^{-1}\bigr)\;,
\]
where $\Om_{\rm new}$ is a new Haar matrix independent of all other random elements, i.e., of $(\Om,\Am,\Bm,\mathcal M,\uv)$.\footnote{We introduce a new Haar $\Om_{\rm new}$, so that $\chi_N$ is equal in distribution to $\frac{1}{N}{\rm tr}\bigl((\Om^\top\Am\Om+\Bm)^{-1}\bigr) $ but independent of $\Om$.}
 Note  that $\chi_N<\min(\frac{1}{N}{\rm tr}(\Am^{-1}),\frac{1}{N}{\rm tr}(\Bm^{-1}))$, so that both $\Em_A$ and $\Em_B$ are well defined.
It is also interesting to note that these transformations are used in Tao's proof of the free additive convolution \cite[Section 2.3.4]{tao2023topics}.

Our method allows us to go one level beyond the deterministic equivalent and to obtain a precise Gaussian characterization of $\gammav$, which is new for this problem. Specifically, we control the distance between $\gammav$ and $\gammav^{(t)}$ through the identity (Lemma~\ref{P1}) 
\begin{align}
\gammav=\mathds{1}_{\mathcal E_N}\gammav^{(t)}+\mathds{1}_{\mathcal E_N}\left(\Id_N-\Om^\top\Em_{A}\Om\Em_{B}\right)^{{-1}}\left(\gammav^{(t+1)}-\gammav^{(t)}\right)\;, \label{non-asy-decom}
\end{align}
and we will verify the non-asymptotic decomposition for any $T>0$ independent of $N$ (Proposition~\ref{P2})
\begin{align}
\bigl[\gammav^{(0)},\gammav^{(1)},\cdots, \gammav^{(T)}\bigr]= \Gm\sqrt{ \mathcal{C}_{N}^{(0:T)}}+\widetilde\theta_N\matr\Psi\label{explicit}
\;.
\end{align}
Here, $\widetilde\theta_N$ is an auxiliary random variable which is almost surely bounded as $N\to\infty$. Furthermore, for each $p\in\NN$ there is a
deterministic finite constant $C_p$ (independent of $N$) such that $\mathbb E[(\norm{\matr \Psi}_{\rm F})^p]^{\frac 1 p}\leq C_p$.
The matrix $\Gm\in \RR^{N\times (T+1)}$ has i.i.d.\ standard Gaussian entries and is independent of $(\Am,\Bm,\uv,\chi_N)$. The matrix $\mathcal{C}_{N}^{(0:T')}\in\RR^{(T'+1)\times (T'+1)}$ is constructed as follows. Writing $\mathcal{C}_{N}^{(t,s)}$ for its $(t+1,s+1)$ entry and starting from the initial condition $\mathcal{C}_N^{(t,0)} = \tau \delta_{t0}$, we set recursively
\begin{align}
\label{C_N}
\mathcal{C}_{N}^{(t+1,s+1)} &= \frac{1}{N}{\rm tr}(\Em_A^2)\left[{\chi_N^{-2}}+\frac{1}{N}{\rm tr}(\Em_B^2)\,\mathcal{C}_N^{(t,s)}\right]\;.
\end{align}
We next state our main asymptotic results, which follow essentially from the decomposition \eqref{explicit} together with the contraction properties of the dynamical covariance matrix $\mathcal{C}_{N}^{(0:T')}$.

\subsection{Main Results}
 
\begin{assumption}\label{as1}
Let $\Am,\Bm$ be symmetric positive definite (random, in general) matrices (which may depend on each other, e.g., $\Am=\Bm$). Let the empirical eigenvalue distributions of $\Am$ and $\Bm$ converge almost surely to limiting deterministic distributions as $N\to\infty$. Let $\liminf_{N\to\infty}\sigma_{\min}(\Am)\overset{\rm a.s.}{>}0$ and $\limsup_{N\to\infty}\sigma_{\max}(\Am)\overset{\rm a.s.}{<}\infty$, and likewise for $\Bm$. Here, $\sigma_{\min}(\cdot)$ (or $\sigma_{\max}(\cdot)$) denotes the minimum (or maximum) singular value of the matrix in the argument.  
\end{assumption}

Note that with $\Om$ being Haar independent of $(\Am,\Bm)$ Assumption~\ref{as1} implies that $\Om^\top\Am\Om$ and $\Bm$ are almost surely
asymptotically free\cite{hiai2000asymptotic}, so the model considered here is a sum of asymptotically free matrices and the limiting eigenvalue distribution of the sum is given by the free
additive convolution ${\rm F}\doteq{\rm F}_{\Am}\boxplus{\rm F}_{\Bm}$. 
 
Then, under Assumption~\ref{as1}, we introduce the asymptotic quantities
\begin{align}\label{chieta}
    (\chi, \eta) &\doteq \int (x^{-1},x^{-2})\, {\rm d}{\rm F}(x)\;.
\end{align}
Note that $\chi,\eta<\infty$, since $\liminf_{N\to\infty}\sigma_{\min}(\Om^\top\Am\Om+\Bm)\overset{\rm a.s.}{>}0$. We further define
\begin{align}
    z^\star&\doteq-{\rm R}_{\Am}(-\chi)<0\\
    \tau &\doteq {\rm R}_{\Am}'(-\chi)
    \bigl(1 + \eta\, {\rm R}'_{\Am}(-\chi)\bigr)\geq 0\;,\label{tau}
\end{align}
where ${\rm R}_{\Am}$ is the R-transform (see Definition~\ref{defR})  of the limiting distribution ${\rm F}_{\Am}$ and ${\rm R}'_{\Am}$ is its derivative.
 
\begin{theorem}[Deterministic Equivalent Law]\label{th1}
Let Assumption~\ref{as1} hold. Consider test matrix
\[
\Km=\sum_{i=1}^{\lfloor N^q\rfloor}\lambda_i\uv_i\vv_i^\top,
\qquad q\in[0,C],
\]
where $C$ is a constant, $\|\uv_i\|=\|\vv_i\|=1$  and $\limsup_{N\to\infty}\max_i|\lambda_i|\overset{\rm a.s.}{<}\infty$. \emph{No orthogonality} is assumed among vectors in $\mathcal M\doteq
\{\uv_i,\vv_i\}_{i\leq \lfloor N^q\rfloor}$.
If $\Om$ is independent of $(\Am,\Bm,\mathcal M)$,  then
\[
\lim_{N\to \infty}\frac1{N^q}
\operatorname{tr}\!\left(
\Km\Bigl[
\left(\Om^\top \Am \Om + \Bm\right)^{-1}
-
(\Bm-z^\star\Id_N)^{-1}
\Bigr]
\right)
\overset{\mathrm{a.s.}}{=}0\;.
\]
\end{theorem}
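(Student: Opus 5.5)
We reduce the trace statement to the vector-level Gaussian characterization of $\gammav$ and use the fact that the Gaussian part is independent of $\mathcal M$. Write $\Deltam\doteq(\Om^\top\Am\Om+\Bm)^{-1}-(\Bm-z^\star\Id_N)^{-1}$ and $\Pm\doteq\Bm-z^\star\Id_N$. Then
\[
\frac1{N^q}\operatorname{tr}(\Km\Deltam)=\frac1{N^q}\sum_{i}\lambda_i\vv_i^\top\Deltam\uv_i .
\]
Since $\lambda_i$ is uniformly bounded, it suffices to show that $\max_{i\le\lfloor N^q\rfloor}|\vv_i^\top\Deltam\uv_i|\to0$ almost surely. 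A bound of the form $\frac1{N^q}\sum_i|\lambda_i||\vv_i^\top\Deltam\uv_i|\to 0$ is enough. Because the number of terms grows polynomially ($\le N^C$), the uniformity over $i$ comes from the non-asymptotic moment bounds together with a union bound and Borel--Cantelli.

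\textbf{Step 1: removing the perturbation and the $N$-dependence.} On $\mathcal E_N$, which holds eventually almost surely since $\epsilon_N^\star\to0$ and $\liminf\sigma_{\min}(\Bm)>0$, we replace $z^\star$ by $z_N^\star$ and remove $\epsilon_N^\star$. The resolvent identity gives
\[
\|(\Om^\top\Am\Om+\Bm-\epsilon\Id_N)^{-1}-(\Om^\top\Am\Om+\Bm)^{-1}\|\le C|\epsilon| ,
\]
and similarly $\|(\Bm-z_N^\star\Id_N)^{-1}-(\Bm-z^\star\Id_N)^{-1}\|\le C|z^\star_N-z^\star|$. Both errors are uniform in unit vectors, so they vanish uniformly in $i$. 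It therefore suffices to treat
\[
\vv_i^\top \Pm_N^{-1}\frac{\gammav_i}{\sqrt N}, \qquad \Pm_N\doteq \Bm-z_N^\star\Id_N,
\]
where $\gammav_i$ is defined by \eqref{gammav0} with $\uv=\Pm_N^{-1}\uv_i/\|\Pm_N^{-1}\uv_i\|$. This rescaling is harmless because $\Pm_N^{-1}$ has bounded norm and bounded inverse.

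\textbf{Step 2: inserting the dynamics.} By Lemma~\ref{P1},
\[
\gammav_i=\mathds{1}_{\mathcal E_N}\gammav_i^{(T)}+\mathds{1}_{\mathcal E_N}(\Id_N-\Om^\top\Em_A\Om\Em_B)^{-1}(\gammav_i^{(T+1)}-\gammav_i^{(T)}).
\]
By Proposition~\ref{P2}, each column is $\Gm\sqrt{\mathcal C_N}$ plus $\widetilde\theta_N\matr\Psi$. Testing against the fixed vector $\wv_i\doteq\Pm_N^{-1}\vv_i/\sqrt N$ and using the independence of $\Gm$ from $(\Am,\Bm,\uv_i,\vv_i,\chi_N)$, the Gaussian part gives $\wv_i^\top\Gm\,c$, which is $\mathcal N(0,\|\wv_i\|^2\|c\|^2)$ with variance $O(1/N)$. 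Gaussian tails then make the maximum over $N^C$ indices of size $O(\sqrt{\log N/N})$. The remainder contributes at most $|\widetilde\theta_N|\,\|\matr\Psi\|_{\rm F}/\sqrt N$. Markov's inequality with the $p$-th moment bound $C_p$, with $p$ large relative to $C$, gives
\[
\PP\Bigl(\max_i\|\matr\Psi_i\|_{\rm F}>N^{1/4}\Bigr)\le N^{C}C_p^pN^{-p/4},
\]
which is summable. The index $i$ enters $\matr\Psi$ through $\uv$, and $C_p$ is uniform in $\uv$.

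\textbf{Step 3: the main obstacle.} The hard part is the term carrying $(\Id_N-\Om^\top\Em_A\Om\Em_B)^{-1}$. This matrix does not involve $\uv_i$, but it is not independent of $\Gm$, so the Gaussian argument cannot be applied to it directly. I would first bound its operator norm uniformly almost surely. This follows because it equals $\chi_N$-scaled products of $(\Am-z_A\Id_N)$, the full resolvent and $(\Bm-z_B\Id_N)$, all of which have bounded norms under Assumption~\ref{as1}. One then uses the crude bound $|\wv_i^\top M\xv|\le\|M\|\,\|\xv\|/\sqrt N$, so what is needed is
\[
\frac{\|\gammav^{(T+1)}-\gammav^{(T)}\|}{\sqrt N}\to0 .
\]
By Proposition~\ref{P2} this norm squared is approximately
\[
\frac{\|\Gm(\cdot)\|^2}{N}\approx \mathcal C_N^{(T+1,T+1)}-2\mathcal C_N^{(T+1,T)}+\mathcal C_N^{(T,T)} .
\]
Concentration of $\chi^2$ variables, uniform over $N^C$ indices, makes this approximation hold. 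The contraction of the recursion \eqref{C_N} is the key input: its rate $\frac1N\operatorname{tr}(\Em_A^2)\frac1N\operatorname{tr}(\Em_B^2)$ converges to a limit strictly below $1$. This is a free-probability inequality to be checked from the subordination relations. Consequently all entries $\mathcal C_N^{(t,s)}$ converge to a common fixed point, and the differences are $O(\rho^T)$. Sending $N\to\infty$ and then $T\to\infty$ finishes the argument. Since $T$ is arbitrary and the limit is almost sure for each fixed $T$, we intersect over countably many $T$.
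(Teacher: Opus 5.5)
Your argument is, in the generic case, essentially the paper's proof. The shared steps are: reduction to $\Deltam_N$; rescaling by $(\Bm-z_N^\star\Id_N)^{-1}$ to express $\vv_i^\top\Deltam_N\uv_i$ through $\gammav$; the split of Lemma~\ref{P1}; the Gaussian part of Proposition~\ref{P2}, independent of $\vv_i$, for the $\gammav^{(t)}$ term; the operator-norm bound on $\mathds{1}_{\mathcal E_N}(\Id_N-\Om^\top\Em_A\Om\Em_B)^{-1}$ from \eqref{sep}; $\|\gammav^{(t+1)}-\gammav^{(t)}\|/\sqrt N\to\sqrt{2\tau\rho^{t}}$; moment bounds uniform in $\uv$ with a union bound and Borel--Cantelli over the $\lfloor N^q\rfloor$ indices; and $t\to\infty$ using $\rho<1$. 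The last fact is Lemma~\ref{Cequivalence}. The subordination check you defer reduces to $a+b<1\iff\frac{a}{1-a}\frac{b}{1-b}<1$ with $a=\chi^2{\rm R}_{\Am}'(-\chi)$ and $b=\chi^2{\rm R}_{\Bm}'(-\chi)$, where $a+b<1$ because $1/\eta>0$.

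What your proposal does not cover is the degenerate case, and there the step you lean on fails. You take from \eqref{explicit} that $\widetilde\theta_N$ is almost surely bounded. But $\widetilde\theta_N=\theta_N^{D}$, and $\theta_N$ contains $1/(\phi_N(\Em_A^2)\phi_N(\Em_B^2))$. That term is what gives $1/\sigma_{\min}(\mathcal C_N^{(1:T)})=\Opn{1}$ via \eqref{minC}, which Proposition~\ref{P2} needs for the Cholesky perturbation Lemma~\ref{lemchol1}.

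Assumption~\ref{as1} allows ${\rm F}_{\Am}$ or ${\rm F}_{\Bm}$ to be a Dirac mass, e.g.\ $\Bm=b\Id_N$, the ridge case. Then $\rho=0$, so $\theta_N\to\infty$, or $\theta_N=\infty$ outright when $\Em_B=0$. If ${\rm F}_{\Am}$ is Dirac, also $\tau=0$. Proposition~\ref{P2} is then unavailable, and remainders of size $\theta_N^{D}C_pN^{-1/2}$ need not vanish.

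The paper handles this separately in Appendix~\ref{DiracFA}. It drops $1/\rho_N$ from $\theta_N$, keeping $1/\phi_N(\Em_A^2)$ when only ${\rm F}_{\Bm}$ is Dirac. It uses only iterates $t\le 2$, since with $\rho=0$ the covariance is already stationary after one step. It accepts the rate $\Opn{N^{-1/4}}$ from $|\sqrt{a+x}-\sqrt a|\le\sqrt{|x|}$, which needs no lower bound on $a$ and still suffices in the union bound with $p>4q+6$. You need an argument of this kind to get the theorem under the full Assumption~\ref{as1}.
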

 
Second, we state the Gaussian characterization of the problem.
 
\begin{theorem}[Gaussian Law]\label{th2}
Let Assumption~\ref{as1} hold and let $\Om$ be a Haar orthogonal matrix  independent of $(\Am,\Bm,\uv)$ for some unit-norm vector $\uv$. Then, there exists a sequence of standard Gaussian random vectors $\gv_N\in \RR^N$ independent of $(\Am,\Bm,\uv)$ such that  
\[
\lim_{N\to\infty}\left\Vert\,
(\Bm-z^\star\Id_N)
\left[
(\Om^\top \Am \Om + \Bm)^{-1}
-
(\Bm-z^\star\Id_N)^{-1}
\right]
(\Bm-z^\star\Id_N)\uv-
\sqrt{\frac\tau N}\,\gv_N\right\Vert\overset{\rm a.s.}{=}0\;.
\]
\end{theorem}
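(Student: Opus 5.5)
We will prove Theorem~\ref{th2} by combining the exact identity \eqref{non-asy-decom} (Lemma~\ref{P1}) with the finite-time Gaussian decomposition \eqref{explicit} (Proposition~\ref{P2}), and then letting $N\to\infty$ followed by $T\to\infty$. First we reduce to the vector $\gammav$ of \eqref{gammav0}. Since $z_N^\star\to z^\star$, $\epsilon_N^\star\to0$ and $\liminf\sigma_{\min}(\Bm)>0$ almost surely, we have $\mathds 1_{\mathcal E_N}=1$ eventually. By the resolvent identity and the norm bounds of Assumption~\ref{as1}, the difference between $\gammav/\sqrt N$ and the vector appearing in Theorem~\ref{th2} (with $z^\star$, and $\epsilon=0$) is $O(|z_N^\star-z^\star|+|\epsilon_N^\star|)$ in norm. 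It therefore suffices to find Gaussian $\gv_N$ with $N^{-1/2}\|\gammav-\sqrt{\tau}\,\gv_N\|\to0$ a.s.

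Fix $T$. From \eqref{non-asy-decom} we get
\[
N^{-1/2}\|\gammav-\gammav^{(T)}\|\le N^{-1/2}\,\bigl\|(\Id_N-\Om^\top\Em_A\Om\Em_B)^{-1}\bigr\|\,\|\gammav^{(T+1)}-\gammav^{(T)}\|
\]
on $\mathcal E_N$. We need the operator norm of the inverse to be a.s.\ bounded uniformly in $N$. Our plan is to use $\Id_N-\Om^\top\Em_A\Om\Em_B$ written as a product of factors with resolvents of $\Am,\Bm$ and of $\Om^\top\Am\Om+\Bm-\epsilon^\star_N\Id_N$. This is essentially how Lemma~\ref{P1} is derived, since the fixed point of the dynamics is $\gammav$. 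All these factors have norms controlled by Assumption~\ref{as1} on $\mathcal E_N$, away from the spectrum.

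By \eqref{explicit}, $\gammav^{(T+1)}-\gammav^{(T)}=\Gm(\sqrt{\mathcal C_N^{(0:T+1)}}\,)(e_{T+1}-e_T)+\widetilde\theta_N\matr\Psi(e_{T+1}-e_T)$. The moment bounds on $\matr\Psi$, combined with Markov's inequality and Borel--Cantelli (using a high $p$), give $N^{-1/2}\|\matr\Psi\|_{\rm F}\to0$ a.s. The Gaussian part has squared norm divided by $N$ converging a.s.\ (by the law of large numbers for the $N$ i.i.d.\ rows of $\Gm$) to $\mathcal C^{(T+1,T+1)}-2\mathcal C^{(T+1,T)}+\mathcal C^{(T,T)}$. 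Here $\mathcal C$ is the a.s.\ limit of $\mathcal C_N$, obtained from \eqref{C_N} with $a\doteq\lim\frac1N{\rm tr}(\Em_A^2)$ and $b\doteq\lim\frac1N{\rm tr}(\Em_B^2)$.

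The key contraction step is $ab<1$. This should follow from the subordination relations: it is the derivative condition of the fixed-point map defining $\chi$ for the free convolution, and it is also implied by the bounded inverse above. Given $ab<1$, the recursion $c_{t+1,s+1}=a\chi^{-2}+ab\,c_{t,s}$ converges geometrically along every diagonal to the same value $c_\infty=a\chi^{-2}/(1-ab)$, independent of the offset $t-s$. Hence the increment variance tends to $0$ as $T\to\infty$. Consistency requires $c_\infty=\tau$; we will check this algebraically by expressing $a,b$ through ${\rm R}_\Am'(-\chi)$ and $\eta$ via the derivatives of the Stieltjes transforms at the subordination points. The initial condition $\mathcal C^{(0,0)}=\tau$ is chosen precisely so that this identity holds.

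Finally we define $\gv_N\doteq\Gm\sqrt{\mathcal C_N^{(0:T)}}\,e_T/\sqrt{\mathcal C_N^{(T,T)}}$, suitably normalized. The columns of $\Gm\sqrt{\mathcal C}$ are exact Gaussians, and $\mathcal C^{(T,T)}\to\tau$ as $T\to\infty$, so $\gv_N$ is standard Gaussian and independent of $(\Am,\Bm,\uv)$. To make the bound hold for all $T$ simultaneously, we use a diagonal choice $T=T_N\to\infty$ slowly: each finite $T$ gives an a.s.\ limit bounded by $\varepsilon_T\to0$, and a standard diagonal argument over countably many $T$ yields the a.s.\ conclusion. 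We expect the main obstacle to be the uniform bound on $\|(\Id_N-\Om^\top\Em_A\Om\Em_B)^{-1}\|$ together with the verification that $ab<1$ and that the fixed point equals $\tau$.
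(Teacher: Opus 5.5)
Your proposal is correct in the non-degenerate case and uses the same ingredients as the paper: the reduction of Lemma~\ref{lemma_init}, the identity of Lemma~\ref{P1}, the operator-norm bound on $(\Id_N-\Om^\top\Em_A\Om\Em_B)^{-1}$ read off from \eqref{sep}, Proposition~\ref{P2}, and the R-transform algebra of Lemma~\ref{Cequivalence} giving $\rho<1$ and $\mathcal C^{(t,t)}\equiv\tau$. It differs in how a single Gaussian vector $\gv_N$ is produced.

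\emph{Your route.} You take the $T$-dependent vector $\gv_N^{(T)}\doteq\gammav_{\rm e}^{(T)}/\sqrt{\mathcal C_N^{(T,T)}}$ and then let $T\to\infty$ along a diagonal sequence $T_N$.

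\emph{The paper's route.} It builds the limiting process $\gammav_\infty^{(t)}=\sum_{s\le t}\mathcal B^{(s,t)}\widetilde\gv^{(s)}$. This uses the same Gaussian inputs but the deterministic Cholesky factor of $\mathcal C^{(t,s)}=\tau(1-\rho^{\min(t,s)})$. The process is exactly $\mathcal N(\matr 0,\tau\Id_N)$ at every $t$ and exactly Cauchy: $\gammav_\infty^{(t')}-\gammav_\infty^{(t)}\sim\mathcal N(\matr 0,2\tau\rho^{t}\Id_N)$ for $t'>t$. The paper can therefore fix, once and for all, $\gv_N=\gammav_\infty^{(\lfloor N/2\rfloor-1)}/\sqrt\tau$. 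It then closes with a three-term triangle inequality and $t\to\infty$, with no diagonal selection.

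\emph{Trade-off.} The paper must show $\mathcal B_N^{(0:t)}\to\mathcal B^{(0:t)}$ (via $\mathcal C^{(0:t)}\geq\tau\rho^t\Id_{t+1}$) for the middle term. You avoid Cholesky-factor limits and need only entrywise convergence of $\mathcal C_N$, but your diagonal step must be done with care.

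\textbf{The diagonal step.} The sequence $T_N$ must be deterministic. A pathwise choice $T_N(\omega)$ would destroy the exact Gaussianity of $\gv_N$ and its independence from $(\Am,\Bm,\uv)$. One way to do it:
\begin{enumerate}
\item Set $X_{N,T}\doteq N^{-1/2}\norm{\gammav-\sqrt\tau\,\gv_N^{(T)}}$, so that $\limsup_N X_{N,T}\le K\rho^{T/2}$ a.s., with $K$ a.s.\ finite.
\item Pick deterministic, increasing $M_T$ with $\PP\bigl(\sup_{N\geq M_T}(X_{N,T}-K\rho^{T/2})_+>1/T\bigr)\leq 2^{-T}$.
\item Set $T_N\doteq\max\{T:M_T\le N\}$ and apply Borel--Cantelli.
\end{enumerate}

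\textbf{Other points to fix.}
\begin{itemize}
\item Your alternative claim that $ab<1$ ``is implied by the bounded inverse'' is not valid. A bounded $(\Id_N-M)^{-1}$ says nothing about whether $M$ contracts; take $M=2\Id_N$. Your first justification is the right one. In the paper's notation it reads $\rho<1\iff\chi^2\bigl({\rm R}'_\Am(-\chi)+{\rm R}'_\Bm(-\chi)\bigr)<1\iff\eta<\infty$.
\item $c_\infty=\tau$ is an identity satisfied by $\tau$; it is not produced by the initial condition. What $\mathcal C^{(0,0)}=\tau$ buys is stationarity, $\mathcal C^{(T,T)}=\tau$ for every $T$.
\item A real omission is the degenerate case where ${\rm F}_\Am$ or ${\rm F}_\Bm$ is a point mass. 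Then $\rho=0$, and also $\tau=0$ if ${\rm F}_\Am$ is the point mass. Hence $\theta_N\geq 1/(\phi_N(\Em_A^2)\phi_N(\Em_B^2))\to\infty$, and \eqref{explicit} with an a.s.\ bounded $\widetilde\theta_N$ is not available. Your Borel--Cantelli step on $\widetilde\theta_N\Psim$ relies on exactly that bound, and Proposition~\ref{P2} itself requires $\tau>0$. These cases need a separate argument. The paper (Appendix~\ref{DiracFA}) uses only the first one or two iterates with a redefined $\theta_N$, which suffices because $\rho=0$.
\item Since $\Gm\equiv\Gm_N$ forms a triangular array, the a.s.\ convergence of the normalized Gaussian norms should come from moment bounds plus Borel--Cantelli, as in Lemma~\ref{Gaussiian-Con}, not from the strong law of large numbers.
\end{itemize}
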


\subsection{Organization}
The remainder of the paper is organized as follows.
In Section~\ref{sec_proof}, we prove both Theorem~\ref{th1} and Theorem~\ref{th2}. 
The proof of the underlying non-asymptotic analysis of the proposed dynamics $\gammav^{(t)}$ (i.e., Proposition~\ref{P2}) is given in Section~\ref{ProofP2}.
In Section~\ref{outlook} we provide an outlook. 
Unless they are lengthy, the proofs of auxiliary technical lemmas are given immediately; otherwise, they are deferred to the appendices.
\subsection{Notations}
We denote the normalized trace of a matrix
$\Xm\equiv\Xm_{N}\in \mathbb{R}^{N \times N}$ by
\begin{align}
    \phi_{N}(\Xm) \doteq \frac{1}{N}{\rm tr}(\Xm)\;.\nonumber
\end{align}
For $\mathbf a, \mathbf b \in \mathbb{R}^N$, we denote their normalized inner product by
\[
\langle \mathbf a, \mathbf b \rangle \doteq  \frac{1}{N}\mathbf a^\top \mathbf b .
\]
We write $\Id_N$ for the $N\times N$ identity matrix and $\delta_{ts}$ for the Kronecker delta.

\section{The Proofs of the Main Results}\label{sec_proof}
    
We begin by defining Voiculescu's $R$-transform for measures on $\RR_{+}$.
\begin{definition}\label{defR}
Let $X \geq 0$ be a random variable with distribution ${\rm F}_{X}$,
and let $\mu_{X^{-1}} = \mathbb{E}[X^{-1}]$, with the convention
$\mu_{X^{-1}} = \infty$ if ${\rm F}_X(0) > 0$. The R-transform of
${\rm F}_X$ is given as \cite{guionnet2005fourier}
\[
    {\rm R}_X(s) = z_X(-s) - s^{-1},
    \qquad s \in (-\mu_{X^{-1}},\, 0),
\]
where $z_X$ is the functional inverse of the Stieltjes transform
$s_X(z) = \mathbb{E}[(X-z)^{-1}]$ for $z \in (-\infty, 0)$. The R-transform
is analytic and, unless $X$ is constant, strictly increasing on $(-\mu_{X^{-1}}, 0)$, with boundary value
${\rm R}(-\mu_{X^{-1}}) \equiv \lim_{\omega \to -\mu_{X^{-1}}^+}
{\rm R}(\omega) = 1/\mu_{X^{-1}}$.  
\end{definition}

From Assumption~\ref{as1}, $\liminf_{N\to\infty}\sigma_{\min}(\Am)>0$ and $\Am$ has a limiting eigenvalue distribution ${\rm F}_{\Am}$ almost surely as $N\to \infty$; hence
\begin{align}
\lim_{N\to\infty}\phi_N(\Am^{-1})\overset{\rm a.s.}{=}\underbrace{\int x^{-1} {\rm dF}_{\Am}(x)}_{\doteq\mu_{\Am^{-1}}}<\infty.
\end{align}
Similarly, we have the convergence $\phi_N(\Bm^{-1})\overset{\rm a.s.}{\to} \mu_{\Bm^{-1}}$. Since $\sigma_{\min}(\Om_{\rm new}^\top\Am\Om_{\rm new}+\Bm)\geq \sigma_{\min}(\Am)+\sigma_{\min}(\Bm)$ and, by asymptotic freeness, the sum has the limiting eigenvalue distribution ${\rm F}_{\Am}\boxplus{\rm F}_{\Bm}$, the same argument gives $\chi_N\overset{\rm a.s.}{\to}\chi$. Furthermore, we note the identity, valid for $\Xm,\Ym>\matr 0$
\begin{equation}
\phi_N(\Xm^{-1}-(\Xm+\Ym)^{-1})=\phi_N((\Xm\Ym^{-1}\Xm+\Xm)^{-1})\geq \frac{1}{\sigma_{\max}(\Xm\Ym^{-1}\Xm+\Xm)}\;.
\end{equation}
Thus, from Assumption~\ref{as1}, we have the strict bound
\begin{align}
\chi<\min(\mu_{\Am^{-1}},\mu_{\Bm^{-1}})\;, \label{boundchi}
\end{align}
so that $-\chi$ is an interior point of the domains of both ${\rm R}_{\Am}$ and ${\rm R}_{\Bm}$, while it
is the left endpoint of that of ${\rm R}_{\Om^\top\mathbf{A}\Om+\mathbf{B}}$.
Then, we note the following identity
\begin{align}
    \frac{1}{\chi} \overset{(a)}{=} {\rm R}_{\Om^\top\mathbf{A}\Om+\mathbf{B}}(-\chi)
    \overset{(b)}{=} {\rm R}_{\mathbf{A}}(-\chi) + {\rm R}_{\mathbf{B}}(-\chi),
    \label{subordination}
\end{align}
where $(a)$ and $(b)$ use the definition of $\chi$ in \eqref{chieta} and
the additivity of the R-transform for asymptotically free
matrices \cite{mingo2017free}, respectively. 

Let $z_{\Am}$ denote the functional
inverse of the limiting Stieltjes transform $s_{\Am}(z)\doteq \int (x-z)^{-1}{\rm d F}_{\Am}(x)$ for $z<0$ and similarly we define $z_{\Bm}$.  Note that ${\rm R}_{\Am,\Bm}(-\chi)>0$. Then, from \eqref{subordination} we have
\begin{align}
z_{\Bm}(\chi)\equiv-{\rm R}_{\Am}(-\chi)<0 \quad \text{and}\quad z_{\Am}(\chi)\equiv -{\rm R}_{\Bm}(-\chi)<0\;. 
\end{align}

\subsection{The Large-$N$ Substitution}
For brevity, we define the deviation matrix
\[
    \matr\Delta \doteq (\Om^\top\Am\Om+\Bm)^{-1} - \bigl(\Bm - z^{\star}\Id_N\bigr)^{-1}\;.
\]
We next introduce a large-$N$ equivalent of $\matr\Delta$, denoted $\matr\Delta_N$. Working with $\matr\Delta_N$ in place of $\matr\Delta$ allows us to derive non-asymptotic concentration inequalities that play a key role in the proof.

We first introduce the random variable
\begin{align}
\epsilon_N^\star\doteq z^N_{\Am}(\chi_N) + z^N_{\Bm}(\chi_N) + \chi_N^{-1}
\end{align}
with $z_{\Am}^N(s)$ denoting the functional inverse of the Stieltjes transform
\( s_{\Am}^N(z) \doteq \phi_N\big((\Am - z \Id_N)^{-1}\big) \) for \( z < 0 \). Furthermore, we introduce the event
\begin{align}
\mathcal E_N\doteq \{\vert\epsilon_N^\star\vert\leq\sigma_{\min}(\Bm)\}\;.
\end{align}
The purpose of introducing the event $\mathcal E_N$ is to ensure the existence of the inverse $(\Om^\top\Am\Om + \Bm - \epsilon_N^\star\Id_N)^{-1}$ as  $(\Om^\top\Am\Om  + \Bm - \epsilon_N^\star\Id_N)\geq \sigma_{\min}(\Am)\Id_N$ on $\mathcal E_N$.  We  define
\begin{align}
    \matr\Delta_N \doteq\mathds{1}_{\mathcal E_N} [\left(\Om^\top\Am\Om + \Bm - \epsilon_N^\star\Id_N\right)^{-1} - \big(\Bm - z^N_{\Bm}(\chi_N)\Id_N\big)^{-1}]\;. \label{DeltaN}
\end{align}
Here, $\mathds{1}_{\mathcal E_N}\in\{0,1\}$ denotes the indicator of the event ${\mathcal E_N}$, so that on the complement $\mathcal E_N^{\rm c}$ we have $\matr\Delta_N=\matr 0$.

\begin{lemma}\label{lemma_init}
Under Assumption~\ref{as1}, we verify in Appendix~\ref{App_DD} that
\begin{subequations}
    \label{DD}
    \begin{align}
\lim_{N\to \infty} \left(z_{\Am,\Bm}^N(\chi_N)- z_{\Am,\Bm}(\chi)\right)&\overset{\rm a.s.}{=}0 \\
\lim_{N\to\infty} \epsilon_N^\star&\overset{\rm a.s.}{=}0\label{l2}\\
\lim_{N\to \infty}\sigma_{\max}(\mathds{1}_{\mathcal E_N}\matr\Delta-\matr\Delta_N)&\overset{\rm a.s.}{=}0\;.\label{DDc}
\end{align}
\end{subequations}
\end{lemma}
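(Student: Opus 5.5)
Three claims are needed: convergence of the empirical inverse Stieltjes transforms, $\epsilon_N^\star\to0$, and closeness of $\matr\Delta_N$ to $\mathds 1_{\mathcal E_N}\matr\Delta$ in operator norm. The plan is to prove them in this order, each feeding the next.

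\textbf{Step 1: convergence of the empirical inverses.} Fix $\Xm\in\{\Am,\Bm\}$. For $z<0$, the map $s^N_{\Xm}(z)=\phi_N((\Xm-z\Id_N)^{-1})$ is strictly increasing with derivative $\phi_N((\Xm-z\Id_N)^{-2})$.

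By Assumption~\ref{as1}, almost surely for large $N$ all eigenvalues lie in a fixed interval $[a,b]$ with $a>0$. On that event:
\begin{itemize}
\item $s^N_{\Xm}\to s_{\Xm}$ uniformly on compact subsets of $(-\infty,0]$, by weak convergence and because $x\mapsto (x-z)^{-1}$ is bounded and continuous on $[a,b]$, uniformly in $z\le 0$;
\item the same holds for the derivative, so the derivative is bounded below uniformly on compacts.
\end{itemize}

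By \eqref{boundchi}, $\chi<\mu_{\Xm^{-1}}=s_{\Xm}(0)$, so $z_{\Xm}(\chi)\in(-\infty,0)$ is an interior point. Choose a compact interval $I=[z_{\Xm}(\chi)-\delta,\,z_{\Xm}(\chi)+\delta]\subset(-\infty,0)$. Uniform convergence of $s^N_{\Xm}$ on $I$, strict monotonicity with derivative bounded below, and $\chi_N\to\chi$ then give $z^N_{\Xm}(\chi_N)\to z_{\Xm}(\chi)$ almost surely, by the standard inverse-function continuity argument. In particular $\chi_N$ eventually lies in $s^N_{\Xm}(I)$.

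\textbf{Step 2: $\epsilon_N^\star\to0$.} By Step 1,
\[
\epsilon_N^\star\to z_{\Am}(\chi)+z_{\Bm}(\chi)+\chi^{-1}=-{\rm R}_{\Bm}(-\chi)-{\rm R}_{\Am}(-\chi)+\chi^{-1}=0
\]
by \eqref{subordination}.

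\textbf{Step 3: operator-norm comparison.} Since $\liminf\sigma_{\min}(\Bm)>0$, Step 2 implies $\mathds 1_{\mathcal E_N}=1$ eventually almost surely. On $\mathcal E_N$, write
\[
\mathds 1_{\mathcal E_N}\matr\Delta-\matr\Delta_N=\Bigl[(\Om^\top\Am\Om+\Bm)^{-1}-(\Om^\top\Am\Om+\Bm-\epsilon_N^\star\Id_N)^{-1}\Bigr]+\Bigl[(\Bm-z^N_{\Bm}(\chi_N)\Id_N)^{-1}-(\Bm-z^\star\Id_N)^{-1}\Bigr].
\]
Bound each bracket with the resolvent identity $\Xm^{-1}-\Ym^{-1}=\Xm^{-1}(\Ym-\Xm)\Ym^{-1}$.
\begin{itemize}
\item The first bracket has norm at most $|\epsilon_N^\star|/(\sigma_{\min}(\Am)\cdot\sigma_{\min}(\Am+\Bm\text{-part}))$. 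Both resolvents have smallest eigenvalue at least $\sigma_{\min}(\Am)$ on $\mathcal E_N$, so the norm is $\le|\epsilon_N^\star|/\sigma_{\min}(\Am)^2\to0$.
\item The second bracket has norm at most $|z^N_{\Bm}(\chi_N)-z^\star|/\sigma_{\min}(\Bm)^2$, using $z^\star=z_{\Bm}(\chi)<0$ and $z^N_{\Bm}(\chi_N)<0$. This tends to $0$ by Step 1.
\end{itemize}
Together these give \eqref{DDc}.

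The main technical point is Step 1: one needs joint continuity of the inverse map under simultaneous perturbation of the function and the argument. This is controlled by the uniform lower bound on $\partial_z s^N_{\Xm}$, which is at least $(b-z)^{-2}$, on a compact neighbourhood of $z_{\Xm}(\chi)$.
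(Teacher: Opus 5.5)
Your proposal is correct and follows the paper's proof essentially step by step: convergence of the empirical inverse Stieltjes transforms, then $\epsilon_N^\star\to0$ via the subordination identity \eqref{subordination}, then the resolvent identity with the bounds $|\epsilon_N^\star|/\sigma_{\min}(\Am)^2$ and $|z_{\Bm}^N(\chi_N)-z^\star|/\sigma_{\min}(\Bm)^2$. The only variation is in Step 1: you use uniform convergence on a compact neighbourhood of $z_{\Xm}(\chi)$ with the derivative lower bound $(b-z)^{-2}$, whereas the paper applies the mean-value theorem with $\partial_z s_{\Xm}^N\geq (s_{\Xm}^N)^2$ at fixed $s$ and then the Lipschitz bound $\partial_s z_{\Xm}^N\leq s^{-2}$ to pass from $\chi$ to $\chi_N$.
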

Note that \eqref{l2} gives
$\mathds{1}_{\mathcal E_N}\overset{\rm a.s.}{\to}1$ as $N\to \infty$. 
Hence, we have the implication
\begin{align}
 \lim_{N\to\infty}(1-\mathds{1}_{\mathcal E_N})\delta_N\overset{\rm a.s.}{=}0 \quad \text{if}~~ \limsup_{N\to \infty}{\delta_N}\overset{\rm a.s.}{<}\infty\;.\label{implication}   
\end{align}
Hence, the $\Deltam_N$ substitution can be used directly for the asymptotic results in
Theorems~\ref{th1} and~\ref{th2}. Indeed, writing
$\Deltam-\Deltam_N=(\mathds 1_{\mathcal E_N}\Deltam-\Deltam_N)+(1-\mathds 1_{\mathcal E_N})\Deltam$
and noting that $\limsup_{N\to\infty}\sigma_{\max}(\Deltam)\overset{\rm a.s.}{<}\infty$ under
Assumption~\ref{as1}, the second term is handled by the implication above; for the first, the
trivial inequality $|\uv^\top\Xm\vv|\leq\sigma_{\max}(\Xm)$, valid for $\|\uv\|=\|\vv\|=1$, together
with Lemma~\ref{lemma_init} gives
\[
    \limsup_{N \to \infty} \left\vert \frac{1}{N^q} \operatorname{tr}\!\left(
        \Km \left[
            \mathds 1_{\mathcal E_N}\matr\Delta-\matr\Delta_N
        \right]
    \right) \right\vert\leq \limsup_{N \to \infty} \max_i\vert\lambda_i\vert\cdot \sigma_{\max}(\mathds{1}_{\mathcal E_N}\matr\Delta-\matr\Delta_N)
\overset{\rm a.s.}{=} 0\;.
\]
\subsection{The Dynamical Mean-Field Approach}
Setting $z^\star_N \doteq z^N_\Bm(\chi_N)$, we write $\gammav$ in \eqref{gammav0}
explicitly as
\begin{align}
\gammav&\doteq \sqrt{N}(\Bm-{z}_{\Bm}^N(\chi_N)\Id_N)\matr \Delta_N(\Bm-{z}_{\Bm}^N(\chi_N)\Id_N)\uv\label{gammav}\;.
\end{align}
Note that $\gammav$ involves the
indicator $\mathds{1}_{\mathcal{E}_N}$ through $\matr\Delta_N$; in particular $\mathds{1}_{\mathcal{E}_N} \gammav = \gammav$.

Starting from an independent random initialization $\gammav^{(0)}=\sqrt{\tau}\widetilde\gv^{(0)}$ with $\widetilde \gv^{(0)}\sim\mathcal N(\matr 0,\Id_N)$, and for iteration steps $t=1,2,\ldots$, we proceed as
\begin{align}
\label{oamp}
\gammav^{(t)}&=\Om^\top\Em_A\Om[\Em_B\gammav^{(t-1)}+\sqrt{N}\chi_N^{-1}\uv]\;.
\end{align}
Here we note that $\gammav^{(t)}$ is always well defined, both on $\mathcal E_N$ and on $\mathcal E_N^{\rm c}$.

We note the following identities:
\begin{align}
\phi_N(\Em_A)&=\phi_N(\Em_B)=0\\
\mathds{1}_{\mathcal E_N}\left(\Om^\top\Am\Om + \Bm - \epsilon_N^\star\Id_N\right)^{-1}\;
&=\mathds{1}_{\mathcal E_N}\chi_N(\Em_B+\Id_N)
(\Id_N-\Om^\top\Em_A\Om\Em_B)^{-1}
(\Om^\top\Em_A\Om+\Id_N)\label{sep}\;.
\end{align}
The first identity follows from the definition of ${z}_{\Am}^N(s)$, which gives $\phi_N((\Am-{z}_{\Am}^N(s)\Id_N)^{-1})=s_{\Am}^N({z}_{\Am}^N(s))=s$. The second identity uses the manipulation \[(\Xm+\Ym-s\Id_N)^{-1}=\Xm^{-1}(\Xm^{-1}+\Ym^{-1}-s\Ym^{-1}\Xm^{-1})^{-1}\Ym^{-1}\;.\]

\begin{lemma}\label{P1}
For any $t\geq 0$, we have
\begin{equation}
\mathds{1}_{\mathcal E_N}(\gammav -\gammav^{(t)})=\mathds{1}_{\mathcal E_N}(\Id_N-\Om^\top\Em_A\Om\Em_B)^{-1}(\gammav^{(t+1)}-\gammav^{(t)})\;.   \label{gammassep}
\end{equation}
\begin{proof}
Note that on the event $\mathcal E_N^{\rm c}$ both
sides of \eqref{gammassep} vanish. Also, for short, we consider the substitution
\[\Om^\top\Em_A\Om\to \Em_{A}\;.\] Conditioned on event  $\mathcal E_N$ we have from~\eqref{sep}
\begin{align}
\gammav= \sqrt{N}[(\Id_N-\Em_A\Em_B)^{-1}(\Em_A+\Id_N)-\Id_N](\Bm-{z}_{\Bm}^N(\chi_N)\Id_N)\uv\;.
\end{align}
Multiplying both sides by $(\Id_N-\Em_A\Em_B)$ gives
\begin{align}
\gammav=\Em_A\Em_B\gammav+\sqrt{N}\Em_A\uv/\chi_N\;.
\end{align}
Then, for the deviations $\matr\delta^{(t)}\doteq \gammav-\gammav^{(t)}$, we obtain the recursion $\matr\delta^{(t+1)}=\Em_A\Em_B\matr\delta^{(t)}$.
Note also that
\begin{align}
\gammav^{(t+1)}-\gammav^{(t)}=\matr\delta^{(t)}-\matr\delta^{(t+1)}=(\Id_N-\Em_A\Em_B)\matr\delta^{(t)}\;.
\end{align}
Multiplying both sides by $(\Id_N-\Em_A\Em_B)^{-1}$ yields \eqref{gammassep}.
\end{proof}
\end{lemma}
Below, we prove Theorems~\ref{th1} and~\ref{th2} in the case where neither
${\rm F}_{\Am}$ (the limiting eigenvalue distribution of $\Am$) nor ${\rm F}_{\Bm}$ is a Dirac measure. The case in which ${\rm F}_{\Am}$ or ${\rm F}_{\Bm}$ is a Dirac measure at a nonzero point is treated separately in Appendix~\ref{DiracFA}.

\subsection{The Non-Asymptotic Analysis}
\subsubsection*{Concentration Inequalities with $\Opn{\kappa}$}
 We analyze the dynamics $\gammav^{(t)}$ using concentration inequalities in terms of
$\mathcal{L}^p$ norms conditioned on the \emph{quenched} sigma-algebra
\begin{equation}
\mathcal{F}_N \doteq \sigma\bigl(\Am,\, \Bm,\, \Om_{\rm new},\uv\bigr)
\label{eq:sigmaalgebra}\;.
\end{equation}
Note that  $\chi_N$, $z^N_{\Am,\Bm}(\chi_N)$, $\phi_N(\Em_{A,B}^2)$ are all $\mathcal{F}_N$-measurable, while $(\Om,\gammav^{(0)})$  is independent of
$\mathcal{F}_N$. The bounds are expressed in terms of the random variable
\begin{equation}
\theta_N \doteq \max\left\{
\frac{1}{\chi_N},\;
\frac{1}{\lvert z^N_{\Am}(\chi_N)\rvert},\;
\frac{1}{\lvert z^N_{\Bm}(\chi_N)\rvert},\;
\frac{1}{\phi_N(\Em_{A}^2)\,\phi_N(\Em_{B}^2)}
\right\}\geq 1
\label{eq:lambdaN}
\end{equation}
where we assume that neither $\Am$ nor $\Bm$ is proportional to $\Id_N$ (so that $\phi_N(\Em_{A,B}^2) > 0$). Above, the inequality is due to the fact that $\chi_N\vert z_{\Am}^N(\chi_N)\vert\leq 1$. 

For a sequence of random variables $X \equiv X_N$, we write $X = \Opn{1}$ if there
exists a deterministic $D \geq 0$ such that for every $p \in \mathbb{N}$ there is a
deterministic finite constant $C_p$, with $C_p$ and $D$ independent of $N$ and of the
realization of the quenched randomness, for which
\begin{equation}
\mathbb{E}\bigl[\lvert X \rvert^p \,\big\vert\, \mathcal{F}_N\bigr]^{1/p}
\leq\theta_N^{D} C_p\;.
\label{eq:opnotation}
\end{equation}
Moreover, for a sequence of random variables $Y \equiv Y_N$, we write
$Y = \Op{1}$ if
\begin{equation}
\mathbb{E}\bigl[\lvert Y \rvert^p \,\big\vert\, \mathcal{F}_N\bigr]^{1/p}
\leq C_p
\label{eq:opuniform}
\end{equation}
for every $p \in \mathbb{N}$ and deterministic constants $C_p$. Directly from the two
definitions, if $X = \Opn{1}$ with exponent $D$ as in \eqref{eq:opnotation}, then
\begin{equation}
Y=\theta_N^{-D} X = \Op{1}\;.
\end{equation}

By a slight abuse of notation, for a random matrix $\Xm \equiv \Xm_N \in \mathbb{R}^{d \times t'}$
(we will use $d = 1$ or $d = N$, with $t'$ independent of $N$) and for any
deterministic $\kappa > 0$ (e.g.\ $\kappa = N$, $\kappa = 1$, or $\kappa = 1/\sqrt{N}$), we
write
\begin{equation}
\Xm = \Opn{\kappa}
\quad \text{if} \quad
\tfrac{1}{\kappa}\lVert \Xm \rVert_{\rm F} = \Opn{1}.
\label{eq:matrixnotation}
\end{equation}

Finally, we record the following elementary arithmetic properties of $\Opn{\kappa}$ which will be frequently used: for any random variables $A_N = \Opn{\kappa}$ and $B_N = \Opn{\tilde \kappa}$
\begin{subequations}
\label{arit}
\begin{align}
A_N+B_N&=\Opn{\max(\kappa,\widetilde \kappa)}, \\
A_NB_N&=\Opn{\kappa\tilde\kappa},\\
\sqrt{1+A_N}&=1+\Opn{\kappa}
\end{align}
\end{subequations}
The first two properties follow from the Minkowski inequality and H\"older's inequality, respectively; the third from the bound $\vert 1- \sqrt{1+x}\vert=\frac{\vert x\vert}{1+\sqrt{1+x}}\leq \vert x\vert$.
\subsubsection*{The Non-Asymptotic Decoupling}
We now recall the $(T'+1)\times (T'+1)$ matrix $\mathcal C_N^{(0:T')}$ introduced
in \eqref{C_N}, which is symmetric positive definite whenever  neither $\Am$ nor $\Bm$ is proportional to $\Id_N$ and $\tau>0$ (which is the case when ${\rm F}_{\Am}$ is not a Dirac measure). Hence, its Cholesky
decomposition is unique, and we denote it as
\begin{align}
\label{BdefTh2}
{\mathcal B}_N^{(0:T')}&={\rm chol}({\mathcal C}_N^{(0:T')})\;,
\end{align}
where ${\mathcal B}_N^{(0:T')}$ is the upper-triangular $(T'+1)\times (T'+1)$ matrix
with positive diagonal entries such that
${\mathcal C}_N^{(0:T')}=({\mathcal B}_N^{(0:T')})^\top{\mathcal B}_N^{(0:T')}$.
We then introduce "effective'' dynamics as
\begin{align}\label{eff_dynamics}
\gammav_{\rm e}^{(t')}= \sum_{0\leq s\leq t'}{{\mathcal B}}^{(s,t')}_N\widetilde\gv^{(s)},
\qquad 0\leq t'\leq T'<N/2;
\end{align}
Here, we recall $\gammav^{(0)}=\sqrt{\tau}\widetilde\gv^{(0)}$ and
$\{\widetilde\gv^{(t+1)}\sim\mathcal {N}(\matr 0,\Id_N)\}_{t}$ is a set of independent Gaussian random vectors, all independent of the quenched sigma-algebra $\mathcal F_{N}$. Later in Section~\ref{ProofP2} the independent Gaussian vectors $\{\widetilde\gv^{(t+1)}\sim\mathcal {N}(\matr 0,\Id_N)\}_{t}$ will be used in representing the Haar matrix $\Om$. 

\begin{proposition}\label{P2}
Let $\Om$ be a Haar random matrix independent of $\mathcal F_N$. 
We verify in the next section that for any $t\geq 0$ fixed with respect to $N$, we have 
\begin{equation}
\gammav^{(t)}=\gammav_{\rm e}^{(t)}+\Opn{1}\label{decom}
 \end{equation}
whenever neither $\Am$ nor $\Bm$ is proportional to $\Id_N$ and $\tau>0$.
\end{proposition}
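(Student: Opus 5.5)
We will prove Proposition~\ref{P2} with the Householder Dice construction of \cite{lu2021householder}. The idea is to generate the Haar matrix $\Om$ sequentially, using the Gaussian vectors $\widetilde\gv^{(t+1)}$, so that each iteration of \eqref{oamp} uses only one fresh Gaussian direction. Conditionally on $\mathcal F_N$, the matrices $\Em_A$ and $\Em_B$ are deterministic. We introduce the auxiliary iterates
\[
\vv^{(t)}\doteq \Em_B\gammav^{(t-1)}+\sqrt N\chi_N^{-1}\uv,\qquad \gammav^{(t)}=\Om^\top\Em_A\Om\,\vv^{(t)} .
\]
The dynamics then alternates two maps. First, $\Om$ is applied to $\vv^{(t)}$. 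Second, $\Om^\top$ is applied to $\Em_A\Om\vv^{(t)}$. Householder Dice realizes $\Om$ as a product of random Householder reflections. At step $t$, the action of $\Om$ on the new vector $\vv^{(t)}$ is built as follows. The component of $\vv^{(t)}$ lying in the span of past inputs is sent to the corresponding past outputs. The orthogonal residual is sent to a fresh uniformly random direction, which we write as a normalized Gaussian $\widetilde\gv^{(t)}$ projected off the past output span. Doing the same bookkeeping for $\Om^\top$ gives an exact representation
\[
\gammav^{(t)}=\sum_{s\le t}\beta^{(s,t)}_N\,\widetilde\gv^{(s)}+(\text{finite-rank projection corrections}),
\]
in which the coefficients $\beta^{(s,t)}_N$ are Gram--Schmidt coefficients. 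These coefficients are explicit functions of normalized inner products $\langle\cdot,\cdot\rangle$ among the past iterates, among their images under $\Em_A$ and $\Em_B$, and of $\uv$.

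The second step computes the asymptotic values of these inner products, with $\Opn{1/\sqrt N}$ fluctuations. We argue by induction on $t\le T$. The key concentration facts, all conditional on $\mathcal F_N$, are the following. For a Gaussian vector $\gv$ independent of a deterministic matrix $\Xm$, we have $\langle \gv,\Xm\gv\rangle=\phi_N(\Xm)+\Opn{1/\sqrt N}$ and $\langle \gv,\Xm\av\rangle=\Opn{\|\av\|/N}$. Since $\phi_N(\Em_A)=\phi_N(\Em_B)=0$, every cross term with $\uv$ or with an $\Em$-image of an orthogonal direction has mean zero. The only surviving second moments are then
\[
\langle\gammav^{(t+1)},\gammav^{(s+1)}\rangle\approx\phi_N(\Em_A^2)\,\langle\vv^{(t+1)},\vv^{(s+1)}\rangle
\quad\text{and}\quad
\langle\vv^{(t+1)},\vv^{(s+1)}\rangle\approx\chi_N^{-2}+\phi_N(\Em_B^2)\langle\gammav^{(t)},\gammav^{(s)}\rangle,
\]
which is exactly recursion \eqref{C_N}. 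The initialization $\gammav^{(0)}=\sqrt\tau\widetilde\gv^{(0)}$ is independent of $\Om$, so it gives $\mathcal C^{(t,0)}_N=\tau\delta_{t0}$ up to $\Opn{1/\sqrt N}$. We conclude that the Gram matrix of $(\gammav^{(0)},\dots,\gammav^{(t)})$ equals $\mathcal C_N^{(0:t)}+\Opn{1/\sqrt N}$. The Gram--Schmidt coefficients $\beta_N$ are the Cholesky factor of this Gram matrix. Cholesky factorization is smooth on positive definite matrices, so $\beta_N^{(s,t)}=\mathcal B_N^{(s,t)}+\Opn{1/\sqrt N}$. Each $\widetilde\gv^{(s)}$ has norm $O(\sqrt N)$, so the coefficient error contributes $\Opn{1}$ in Euclidean norm. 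The finite-rank projection corrections are also $\Opn{1}$, because they have the form (past vector)$\times\Opn{1/\sqrt N}$ with past vectors of norm $O(\sqrt N)$. This yields \eqref{decom}.

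The main obstacle is making every bound polynomial in $\theta_N$, as the $\Opn{\cdot}$ notation requires. The difficulty comes from the Cholesky step and from the normalizations by residual norms. Both involve inverting the smallest eigenvalue of $\mathcal C_N^{(0:t)}$, or dividing by the norm of the orthogonal residual, and these quantities could a priori be small. We would handle this in three parts. First, we bound $\|\Em_A\|$ and $\|\Em_B\|$ by powers of $\theta_N$, using $|z^N_{\Am}(\chi_N)|^{-1}\le\theta_N$, which controls the resolvent. Second, recursion \eqref{C_N} shows that the Schur complements of $\mathcal C_N$ satisfy
\[
\det\mathcal C_N^{(0:t+1)}\ge c\,\bigl(\phi_N(\Em_A^2)\phi_N(\Em_B^2)\bigr)^{t}\,\tau\,\chi_N^{-2(t+1)}\cdots ,
\]
which is bounded below by $\theta_N^{-D}$. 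Third, residual norms that are not small enough are controlled by working with moments on the complement event, where a crude bound applies. We expect this determinant or lower-bound computation, together with the corresponding treatment of the inverse residual norms, to require the most care.
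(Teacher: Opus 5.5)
Your proposal is essentially the paper's proof: the Householder Dice (Gram--Schmidt) Haar-free representation of Lemma~\ref{lemmaseq}, Gaussian concentration in which $\phi_N(\Em_A)=\phi_N(\Em_B)=0$ kills the memory and $\uv$ cross terms, the Gram-matrix recursion \eqref{C_N}, a Cholesky perturbation step, and induction on $t$, with $\sigma_{\max}(\Em_{A,B})$ and $1/\sigma_{\min}(\mathcal C_N^{(1:T)})$ bounded polynomially in $\theta_N$ as in \eqref{sigmaeab} and \eqref{minC}. The residual-norm difficulty you anticipate does not arise in the paper, and no good/bad-event splitting is needed: Gram--Schmidt vectors always have norm exactly $\sqrt N$, the empirical coefficients are plain inner products, and Lemma~\ref{lemchol1} bounds the Cholesky perturbation globally using only $1/\sigma_{\min}$ of the deterministic target. (Note also that each iteration uses two fresh Gaussians, $\gv^{(t)}$ for the $\Om$-step and $\widetilde\gv^{(t)}$ for the $\Om^\top$-step, not one.)
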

Now, for any $T>0$ fixed w.r.t. $N$ let us define
\begin{align}
\Gm\doteq [\widetilde\gv^{(0)},\widetilde\gv^{(1)},\cdots, \widetilde\gv^{(T)}]\mathcal B_N^{(0:T)}(\mathcal C_N^{(0:T)})^{-\frac 1 2}
\end{align}
We then have from Proposition~\ref{P2} that 
\begin{align}
\bigl[\gammav^{(0)},\gammav^{(1)},\cdots, \gammav^{(T)}\bigr]= \Gm\sqrt{ \mathcal{C}_{N}^{(0:T)}}+\Opn{1}\label{non_new}
\end{align}
Note that  $\mathcal B_N^{(0:T)}(\mathcal C_N^{(0:T)})^{-\frac 1 2}$ is an orthogonal matrix and thereby $\Gm$ is a standard Gaussian matrix independent of $\mathcal F_N$. This verifies the non-asymptotic decomposition \eqref{explicit} given that $\theta_N$ is almost surely bounded as $N\to\infty$ which we will verify in the sequel.

Second, we underline that $\Opn{1}$ term in \eqref{decom} or \eqref{non_new} obeys $\mathcal L^p$ bounds that are uniform over $\{\uv:\norm{\uv}=1\}$:
\begin{remark}\label{universality}
For any unit-norm vector $\uv$, there exists an orthogonal matrix $\Um$, chosen as an $\mathcal
F_N$-measurable function of $\uv$, such that $\uv=\Um\ev_1$, where $\ev_1$ is a standard basis
vector in $\RR^N$. Since $\Om$ is Haar-distributed and independent of $(\mathcal F_N,\gammav
^{(0)})$, the matrix $\Om\Um$ is Haar-distributed conditionally on $\mathcal F_N$, hence
independent of $(\mathcal F_N,\gammav^{(0)})$. Replacing $\gammav^{(t)}$ by $\Um^\top\gammav^{(
t)}$, $\gammav^{(0)}$ by $\Um^\top\gammav^{(0)}\overset{d}{=}\gammav^{(0)}$, $\Om\Um$ by $\Om$,
and $\Em_B$ by $\Um^\top\Em_B\Um$, we obtain the recursion
\begin{align}
\label{oampe}
\gammav^{(t)}&=\Om^\top\Em_A\Om[\Um^\top\Em_B\Um\gammav^{(t-1)}+\sqrt{N}\chi_N^{-1}\ev_1]\;.
\end{align}
Hence, the decomposition \eqref{decom} is valid. In particular, $\Um\Gm\sim\Gm$, which is independent of $\mathcal F_N$, and $\norm{\Um\Psim}_{\rm F}=\norm{\Psim}_{\rm F}$ for $\Psim=\Opn{1}$. This verifies that $\Opn{1}$ in \eqref{decom}
or \eqref{non_new} gives $\mathcal L^p$ bounds uniform over $\{\uv:\norm{\uv}=1\}$.
\end{remark}

While $\mathcal{C}_{N}^{(0:T)}$ is random through $\phi_N(\Em_A^2)$ and $\phi_N(\Em_B^2)$ and $\chi_N$, it admits the following limiting expression.

\begin{lemma}\label{Cequivalence}
We introduce  $(T'+1)\times (T'+1) $ matrix $\mathcal C^{(0:T')}$ such that for any $0\leq s,t\leq T'$ we set 
\begin{equation}
\mathcal C^{(t,s)}\doteq \left\{\begin{array}{cc}
   \tau   & t=s  \\
   \tau(1-\rho^{\min(t,s)}) & t\neq s
\end{array}\right. 
\end{equation}
where $\tau$ is defined as in \eqref{tau} and 
\begin{equation}
\rho\doteq \frac{{\rm R}_{\Am}'(-\chi)}{\frac{1}{\eta}
    +{\rm R}_{\Am}'(-\chi)} \cdot
    \frac{{\rm R}_{\Bm}'(-\chi)}{\frac{1}{\eta}
    +{\rm R}_{\Bm}'(-\chi)}<1\;.
\end{equation}
Here, the upper bound follows from the fact that $\eta<\infty$ and ${\rm R}'_{\Am,\Bm}(s)\geq 0$.
In fact, we have 
\[\lim_{N\to\infty}\phi_N(\Em_{A}^2)\phi_N(\Em_{B}^2)\overset{\rm a.s.}{=}\rho\;.\] 
Furthermore, for any $s,t\geq 0$ fixed w.r.t. $N$, we have
\begin{equation}
\lim_{N\to \infty}\mathcal{C}_N^{(t,s)}\overset{\rm a.s.}{=}\mathcal{C}^{(t,s)}\;.
\end{equation}
\begin{proof}
 See Appendix~\ref{Proof_Cequivalence}.   
\end{proof}

\end{lemma}

From Lemma~\ref{lemma_init} and Lemma~\ref{Cequivalence} we have that
\begin{align}
\lim_{N\to\infty}\theta_N\overset{\rm a.s.}{=}\underbrace{{\max\left\{\frac{1}{\chi},\frac{1}{\vert z_{\Am}(\chi)\vert},\frac{1}{\vert z_{\Bm}(\chi)\vert},\frac{1}{\rho}\right\}}}_{\doteq \theta}\label{eq:lambda}
\end{align}
when neither ${\rm F}_{\Am}$ (the limiting eigenvalue distribution of $\Am$) nor ${\rm F}_{\Bm}$ is a Dirac measure. 

We are now ready to prove Theorems~\ref {th1}--\ref{th2}. 
\subsection{Proof of Theorem~\ref{th1}}
We note that
\begin{align}
 \left\vert \frac{1}{N^q} {\rm tr}\!\left(
        \Km \matr\Delta_N
        \right)\right\vert\leq  \max_i\vert\lambda_i\vert  \cdot\frac{1}{N^q}\sum_{i\leq\lfloor N^q \rfloor} {\vert \vv_i^\top \matr\Delta_N\uv_i\vert}\;.
\end{align}
Now, for convenience, we fix an arbitrary index $i \leq \lfloor N^q \rfloor$ and write $\mathbf{u'}\equiv \mathbf{u}_i$ and $\mathbf{v'} \equiv \mathbf{v}_i$.
Furthermore,  let
\begin{align}
{\uv}\doteq\frac{(\Bm-{z}_{\Bm}^N(\chi_N)\Id_N)^{-1}\uv'}{\norm{
(\Bm-{z}_{\Bm}^{N}(\chi_N)\Id_N)^{-1}\uv'}} \quad \text{and} \quad
{\vv}\doteq\frac{(\Bm-{z}_{\Bm}^N(\chi_N)\Id_N)^{-1}\vv'}{\norm{
(\Bm-{z}_{\Bm}^{N}(\chi_N)\Id_N)^{-1}\vv'}}\;.
\end{align}

Using the inequality $\norm{(\Bm-{z}_{\Bm}^N(\chi_N)\Id_N)^{-1}\uv'}\leq -1/{{{z}_{\Bm}^N(\chi_N)}}$, we then write
\begin{align}
\vert(\vv')^\top \matr\Delta_N\uv'\vert\leq \frac{1}{{z}_{\Bm}^{N}(\chi_N)^2}\frac{\vert\vv^\top\gammav\vert}{\sqrt N}\;.
\end{align}
From Lemma~\ref{P1}, we have for every $t\geq 0$
\begin{align}
\frac{\vert\vv^\top\gammav\vert}{\sqrt N}&\leq \frac{\vert\vv^\top\gammav^{(t)}\vert}{\sqrt N}+ \sigma_{\max}\!\left(\mathds{1}_{\mathcal E_N}\big(\Id_N-\Om^\top\Em_A\Om\Em_B\big)^{-1}\right)\frac{\norm{\gammav^{(t+1)}-\gammav^{(t)}}}{\sqrt N}\;.
\label{bound1}
\end{align}
Furthermore, from $\eqref{sep}$ it follows that 
\begin{align}
\sigma_{\max}\!\left(\mathds{1}_{\mathcal E_N}\big(\Id_N-\Om\top\Em_A\Om\Em_B\big)^{-1}\right)\leq \chi_N\frac{(\sigma_{\max}(\Am)+1/\chi_N)(\sigma_{\max}(\Bm)+1/\chi_N)}{\sigma_{\min}(\Am)}\label{nbounEAB}\;.
\end{align}
Hence, we trivially have
\begin{align}
\limsup_{N\to\infty} \sigma_{\max}\!\left(\mathds{1}_{\mathcal E_N}\big(\Id_N-\Om^\top\Em_A\Om\Em_B\big)^{-1}\right)\overset{\rm a.s.}{<}\infty.\label{bounEAB}
\end{align}

From Proposition~\ref{P2}, for $t\geq 0$ fixed w.r.t. $N$, we have
\begin{align}
\frac{\vert\vv^\top\gammav^{(t)}\vert}{\sqrt N}&=\frac{\vert\vv^\top\gammav_{\rm e}^{(t)}\vert}{\sqrt N}+\Opn{N^{-\frac 1 2}}\\
&\overset{(a)}{=}\sqrt{\mathcal C_N^{(t,t)}}\frac{\vert Z\vert }{\sqrt N}+\Opn{N^{-\frac 1 2}}=\Opn{N^{-\frac 1 2}}\\
\frac{\norm{\gammav^{(t+1)}-\gammav^{(t)}}}{\sqrt N}&=\frac{\norm{\gammav_{\rm e}^{(t+1)}-\gammav_{\rm e}^{(t)}}}{\sqrt N}+\Opn{N^{-\frac 1 2}}\\
&\overset{(b)}{=}\sqrt{\mathcal C_{N}^{(t,t)}+\mathcal C_N^{(t+1,t+1)}-2\mathcal C_{N}^{(t+1,t)}+\Opn{N^{-\frac 1 2}}}+\Opn{N^{-\frac 1 2}}\\
&\overset{(c)}{=}\underbrace{\sqrt{\mathcal C_{N}^{(t,t)}+\mathcal C_N^{(t+1,t+1)}-2\mathcal C_{N}^{(t+1,t)}}}_{\overset{\rm a.s.}{\rightarrow} \sqrt{2\tau\rho^{t}}}+\Opn{N^{-\frac 1 2}}\; \label{2t}
\end{align}
As to step (a), we note that since $\Om$ is independent of $(\Theta,\vv)$, the
Haar-free representation applies jointly with $\vv$,
with the reference Gaussian inputs $\{\widetilde\gv^{(t)}\}_{t}$ chosen independently
of $(\Theta,\vv)$. Then, from $\norm{\vv}=1$ we have $\vv^\top\gammav_{\rm e}^{(t)}=\sqrt{C_N^{(t,t)}}Z$ where  $Z\sim\mathcal N(0,1)$. Step $(b)$ applies
Lemma~\ref{Gaussiian-Con} with $\Xm=\Id_N$. Step $(c)$ follows from the arithmetic
properties of $\Opn{\kappa}$ in \eqref{arit}, and we have the bound (see~\eqref{minC})
\begin{equation}
\mathcal C_{N}^{(t,t)}+\mathcal C_N^{(t+1,t+1)}-2\mathcal C_{N}^{(t+1,t)}
\geq \tau\min(1,(\phi_N(\Em_A^2)\phi_N(\Em_B^2))^{t})\;,
\end{equation}
whose reciprocal is bounded by $\theta_N^{t}/\tau=\Opn{1}$. 

Now, we first note that above $\Opn{N^{-\frac 1 2}}$ refers implicitly to a random variable $X_i$ such that \[\mathbb E[\vert X_i\vert^p\mid \mathcal F_N]^{\frac 1 p}\leq N^{-\frac 1 2} \theta_N^DC_p\] 
where $C_p,D>0$ are constants independent of the indices $i\le\lfloor N^{q}\rfloor$: uniformity in the right-hand
vectors $\uv_i$ follows from Remark~\ref{universality}, while the left-hand vectors
$\vv_i$ require no separate argument at all, as
$|\vv_i^\top \deltav| \le \|\vv_i\| \, \|\deltav\| = \|\deltav\|$ for every
$\deltav \in \mathbb{R}^N$.

Furthermore, if $X_i=\Opn{N^{-\frac 1 2}}$, then there is a constant $D>0$ such that 
\begin{align}
Y_i\doteq\theta_N^{-D}X_i=\Op{N^{-\frac 1 2}}\;.
\end{align}
We also note from \eqref{eq:lambda} that 
\begin{align}
\lim_{N\to\infty}\theta_N^{D}\overset{\rm a.s.}{=}\theta^{D}.
\end{align}
Moreover, we record the following elementary auxiliary result: if $Y_i=\Op{N^{-1/2}}$ for each $i\leq \lfloor N^q \rfloor$, then for any fixed $p$ we have
\begin{align}
\PP\left(\frac{1}{\lfloor N^q \rfloor}\sum_{i\leq \lfloor N^q \rfloor} \vert Y_i\vert\geq \epsilon\right)&\leq \sum_{i\leq \lfloor N^q \rfloor}\PP\!\left( \vert Y_i\vert\geq \epsilon\right)
\leq N^q \frac{C_{p}^{p}}{\epsilon^{p}}N^{-\frac{p}{2}}\;. \label{pp1}
\end{align}
Choosing $p>2q+3$ and applying the Borel--Cantelli lemma then gives
\begin{align}
\frac{1}{N^q}\sum_{i\leq \lfloor N^q \rfloor} \vert Y_i\vert\overset{\rm a.s.}{\longrightarrow} 0 \quad \text{as} \quad N\to \infty.\label{pp2}
\end{align}

Putting everything together, there exists an almost surely finite random variable $L'$ such that
\begin{align}
\limsup_{N\to \infty}\left\vert \frac{1}{N^q} {\rm tr}\!\left(
        \Km \matr\Delta_N
        \right)\right\vert\leq L'\rho^{\frac{t}{2}}\;.
\end{align}
Here $L’$ can be chosen independently of $t$.
The bound holds simultaneously for all integer $t\geq0$
on a probability-one event. Letting $t\to\infty$ and using
$\rho<1$ proves the claim.
\subsection{Proof of Theorem~\ref{th2}}
First, it follows readily from Lemma~\ref{lemma_init} that
\begin{align}
\lim_{N\to \infty}\frac{1}{\sqrt  N}\left\Vert \sqrt{N}\,
(\Bm-z^\star\Id_N)
\left[
(\Om^\top \Am \Om + \Bm)^{-1}
-
(\Bm-z^\star\Id_N)^{-1}
\right]
(\Bm-z^\star\Id_N)\uv-  \gammav\right\Vert\overset{\rm a.s.}{=}0\;.
\end{align}
Then, in the sequel, we verify that 
\begin{align}
\lim_{N\to \infty}\frac{\norm{\gammav-\sqrt{\tau}\gv_N}}{\sqrt N}\overset{\rm a.s.}{=}0\;.
\end{align}
To this end, analogous to the effective dynamics $\gammav_e^{(t')}$ in \eqref{eff_dynamics} we define 
\begin{align}
\gammav_{\infty}^{(t')}&= \sum_{0\leq s\leq t'}{{\mathcal B}}^{(s,t')}\widetilde\gv^{(s)},
\qquad 0\leq t'\leq T'<N/2,
\end{align}
which is driven by the same Gaussian vectors, while the coefficients ${\mathcal B}^{(t,s)}$ are defined via the Cholesky decomposition (see Lemma~\ref{Cequivalence})
\[ {\mathcal B}^{(0:T')}={\rm chol}({\mathcal C}^{(0:T')})\;. \]
For convenience, we set $\gammav^\star_{\infty}\doteq \gammav_{\infty}^{(\lfloor N/2\rfloor-1)}$
and note that $\gammav^\star_{\infty}\sim \mathcal N(\matr 0,\tau\Id_N)$. We then define $\gv_N
\doteq\gammav_\infty^\star/\sqrt\tau$, which is independent of $\Theta$, and in particular of $
(\Am,\Bm,\uv)$.

Let $t$ be fixed with respect to $N$. Then, we first write
\begin{align}\label{chain}
\norm{\gammav-\gammav_{\infty}^\star}\leq\norm{\gammav-\gammav_{\infty}^{(t)}}+\norm{\gammav_{\infty}^\star-\gammav_{\infty}^{(t)}}\leq  \norm{\gammav-\mathds{1}_{\mathcal E_N}\gammav^{(t)}}+\norm{\gammav_{\infty}^{(t)}-\mathds{1}_{\mathcal E_N}\gammav^{(t)}}+
\norm{\gammav_{\infty}^\star-\gammav_{\infty}^{(t)}}\;.
\end{align}
We first consider the last term in \eqref{chain}. A direct application of Lemma~\ref{Cequivalence} yields
\begin{equation}
\lim_{N\to \infty}\frac{\norm{\gammav_{\infty}^\star-\gammav_{\infty}^{(t)}}}{\sqrt N}\overset{\rm a.s.}{=}\sqrt{2\tau}\rho^{\frac t 2}\;.
\end{equation}
Next, consider the first term in \eqref{chain}. Applying Lemma~\ref{P1} and using
$\mathds{1}_{\mathcal E_N}\gammav=\gammav$, gives
\begin{align}
\frac{\norm{\gammav-\mathds{1}_{\mathcal E_N}\gammav^{(t)}}}{\sqrt N}\leq
\sigma_{\max}\!\left(\mathds{1}_{\mathcal E_N}(\Id_N-\Om^\top\Em_A\Om\Em_B)^{-1}\right)
\frac{\norm{\gammav^{(t+1)}-\gammav^{(t)}}}{\sqrt N}\;.
\end{align}
Furthermore, we recall \eqref{2t}, i.e., 
\begin{align*}
\frac{\norm{\gammav^{(t+1)}-\gammav^{(t)}}}{\sqrt N}=\underbrace{\sqrt{\mathcal C_{N}^{(t,t)}+\mathcal C_N^{(t+1,t+1)}-2\mathcal C_{N}^{(t+1,t)}}}_{\overset{\rm a.s.}{\rightarrow} \sqrt{2\tau}\rho^{\frac t 2}}+\Opn{N^{-\frac 1 2}}\;.
\end{align*}
Note also that if $\delta_N=\Opn{N^{-c}}$ for some constant $c>0$, then $\lim_{N\to\infty}\delta_N\overset{\rm a.s.}{=}0$.
Indeed, writing $\delta_N=\theta_N^{D}\widetilde\delta_N$ with $\widetilde\delta_N=\Op{N^{-c}}$, Markov's inequality gives
$\PP\bigl(\vert\widetilde\delta_N\vert\geq \epsilon\bigr)\leq \frac{C_p^{p}}{\epsilon^{p}}N^{-c p}$ for every
$p\in\NN$ and $\epsilon>0$. Choosing $p>1/c$ gives $\widetilde\delta_N\overset{\rm a.s.}{\to}0$ by the
Borel--Cantelli lemma; the claim follows since $\lim_{N\to\infty}\theta_N^{D}\overset{\rm a.s.}{=}\theta^{D}<\infty$.
Consequently, we have
\begin{align}
\limsup_{N\to \infty}\frac{\norm{\gammav-\mathds{1}_{\mathcal E_N}\gammav^{(t)}}}{\sqrt N}\overset{\rm a.s.}{\leq} L\sqrt{2\tau}\rho^{\frac t 2}\;,\label{boundth2}
\end{align}
where 
$L\doteq\limsup_{N\to\infty}\sigma_{\max}\!\left(\mathds{1}_{\mathcal E_N}(\Id_N-\Om^\top\Em_A\Om\Em_B)^{-1}\right)$
is almost surely finite (see \eqref{bounEAB}). 
It remains to control the middle term in \eqref{chain}.  We recall the property of $\mathds{1}_{\mathcal E_N}$ in \eqref{implication}. Moreover, 
Lemma~\ref{Cequivalence} gives $\mathcal C_N^{(0:t)}\overset{\rm a.s.}{\to}\mathcal C^{(0:t)}$ and letting $N\to\infty$ in \eqref{minC} gives
$\mathcal C^{(0:t)}\geq\tau\rho^{t}\Id_{t+1}$, hence $\mathcal B_N^{(0:t)}\overset{\rm a.s.}{\to}\mathcal B^{(0:t)}$, we obtain

\begin{align}
\lim_{N\to \infty}\frac{\norm{\gammav_{\infty}^{(t)}-\mathds{1}_{\mathcal E_N}\gammav^{(t)}}}{\sqrt N}\overset{\rm a.s.}{=}0\;.
\end{align}

Putting everything together, we get
\begin{equation}
\limsup_{N\to\infty}\frac{\norm{\gammav-\sqrt\tau\,\gv_N}}{\sqrt N}\overset{\rm a.s.}{\leq}(L+1)\sqrt{2\tau}\rho^{\frac t 2}\;.
\end{equation}
Since $t$ can be taken arbitrarily large and $\rho<1$, the claim follows by letting $t\to\infty$.
\section{Proof of Proposition~\ref{P2}}\label{ProofP2}
We rewrite the dynamics \eqref{oamp} as
\begin{subequations}
\label{new_dyn}
\begin{align}
\widetilde\gammav^{(t)}&=\Em_B\gammav^{(t-1)}+\sqrt{N}\uv/\chi_{N}\\
\qv^{(t)}&=\Om\widetilde\gammav^{(t)}\\
\psiv^{(t)}&=\Em_{A}\qv^{(t)}\\
\gammav^{(t)}&=\Om^\top\psiv^{(t)}\;.
\end{align}
\end{subequations}
The dynamics \eqref{new_dyn} are coupled by the Haar (quenched-disorder) matrix $\Om$ via the products $\{\Om\widetilde\gammav^{(t)},\Om^\top\psiv^{(t)}\}$.
As a first step, we follow the method of the ``Householder Dice'' representation of Haar-coupled iterative equations  \cite[Section~III-C]{lu2021householder} to construct a statistically equivalent Haar-free (i.e., $\Om$-free) version of the dynamics in \eqref{new_dyn}.
\subsection{The Haar-Free Equivalent}
We use the classical Gram--Schmidt notation. Let $\vv^{(1:t-1)}= [\vv^{(1)},\vv^{(2)},\ldots, \vv^{(t-1)}]$
be a collection of orthogonal vectors in $\RR^{N}$ such that $\langle \vv^{(s)},\vv^{(s')} \rangle = \delta_{ss'}$ for all $s,s'$, where $\langle\av,\bv\rangle\doteq\frac1N\av^\top\bv$. Denote the projection onto the complement of $\text{span}(\vv^{(1:t-1)})$ by
\begin{equation}
\Pm^\perp_{\vv^{(1:t-1)}}\doteq \Id_N-\frac 1 N\vv^{(1:t-1)}(\vv^{(1:t-1)})^\top=\Id_N-\frac 1 N\sum_{1\leq s<t}\vv^{(s)}(\vv^{(s)})^\top \;.
\end{equation}
For any vector $\bv\in \RR^{N}$ we construct the orthogonal vector $\vv^{(t)}=\GS{\bv}{\vv^{(1:t-1)}}$ where
\begin{equation}
\GS{\bv}{\vv^{(1:t-1)}} \doteq \sqrt N\frac{\Pm^\perp_{\vv^{(1:t-1)}}\bv}{\Vert \Pm^\perp_{\vv^{(1:t-1)}}\bv \Vert}
\end{equation}
unless  $\bv\in{\rm span}(\vv^{(1:t-1)})$.  If $\bv\in{\rm span}(\vv^{(1:t-1)})$,  we instead generate an independent Gaussian vector $\bv_{\rm new}\sim\mathcal N(\matr 0,\Id_N)$ and set
$
\vv^{(t)}=\GS{\bv_{\rm new}}{\vv^{(1:t-1)}}.
$
We also recall the Gram--Schmidt decomposition
\begin{equation}\label{gsd}
\bv=\sum_{1\leq s\leq t}\vv^{(s)}\langle\vv^{(s)},\bv \rangle\;.
\end{equation}

\begin{lemma}\label{lemmaseq} 
Let $N>2T'$. Let $\Om$ be a Haar matrix independent of $\Theta\doteq(\Am,\Bm,\Om_{\rm new},\uv,\gammav^{(0)})$.
Then, the joint distribution of $\Theta$ and
$\gammav^{(0:T')}\equiv
[{{\gammav}^{(0)},{\gammav}^{(1)},\cdots,{\gammav}^{(T')}}]$ generated by the dynamics \eqref{oamp} from $\gammav^{(0)}$, is the same as the joint distribution of $\Theta$ and the sequence generated by the
following dynamics: starting from $\gammav^{(0)}$, for $t=1,2,\cdots,T'$ we construct
\begin{subequations}
\label{hauseholderep}
\begin{align}
\widetilde\gammav^{(t)}&=\Em_B\gammav^{(t-1)}+\sqrt N\uv/\chi_N\\
\vv^{(2t-1)}&=\mathcal{GS}(\gv^{(t)}\vert \vv^{(1:2(t-1))})\\
\widetilde\vv^{(2t-1)}&=\mathcal{GS}({\widetilde\gammav}^{(t)}\vert \widetilde\vv^{(1:2(t-1))})\\
\qv^{(t)}&=\sum_{1\leq s< 2t}\vv^{(s)}\langle\widetilde\vv^{(s)},{\widetilde\gammav}^{(t)} \rangle \\
\psiv^{(t)}&=\Em_{A}\qv^{(t)}\\
\vv^{(2t)}&=\mathcal{GS}({\psiv}^{(t)}\vert \vv^{(1:2t-1)})\\
\widetilde\vv^{(2t)}&=\mathcal{GS}(\widetilde{\gv}^{(t)}\vert \widetilde\vv^{(1:2t-1)})\\
\gammav^{(t)}&=\sum_{1\leq s\leq 2t}\widetilde\vv^{(s)}\langle\vv^{(s)},{\psiv}^{(t)} \rangle.
\end{align}
\end{subequations}
Here, $[\gv^{(1:T')},\widetilde\gv^{(1:T')}]\in\RR^{N\times 2T'}$ and the auxiliary vectors
$\{\bv_{\rm new}\}$ possibly used by $\mathcal{GS}$ consist of independent standard Gaussian entries, jointly
independent of $\Theta$.
\begin{proof}
The proof of Lemma~\ref{lemmaseq} is deferred to Appendix~\ref{Proof_HD}.
\end{proof}
\end{lemma}
Fix $N\geq2$ and let $T'\doteq\lfloor N/2\rfloor-1$. Recall that $\Theta=(\Am,\Bm,\Om_{\rm new},\uv,\gammav^{(0)})$,
and write $\xi\doteq\gammav^{(0:T')}$ for the sequence generated by \eqref{oamp}. Collect the reference Gaussian
inputs into
\[
\tilde\eta
\doteq
\bigl(
\{\gv^{(t)},\widetilde\gv^{(t)}\}_{1\leq t\leq T'},
\{\bv_{\rm new}^{(k)}\}_{1\leq k\leq2T'}
\bigr),
\]
where $\bv_{\rm new}^{(k)}$ is used only in the $k$-th data-driven call of $\mathcal{GS}$ in \eqref{hauseholderep},
i.e., the calls with input $\widetilde\gammav^{(t)}$ or $\psiv^{(t)}$. The components of $\tilde\eta$ are mutually
independent standard Gaussian vectors, and $\tilde\eta$ is independent of $\Theta$. Let $F(\tilde\eta,\Theta)$ denote
the sequence $\gammav^{(0:T')}$ generated by \eqref{hauseholderep}. Then, by Lemma~\ref{lemmaseq},
\begin{equation}
(\xi,\Theta)
\overset d=
(F(\tilde\eta,\Theta),\Theta).\label{suf}
\end{equation}
This equality in law suffices for the proof of
Theorem~\ref{th1}. For Theorem~\ref{th2}, however, we need
to realize the Gaussian inputs on an enlargement of the
original probability space so that the original and
Haar-free trajectories coincide almost surely.
The following remark provides this coupling.
\begin{remark}\label{transfer}
We first make the measurability of $F$ explicit. The Gram--Schmidt operation $\mathcal{GS}(\bv\vert\vv^{(1:t-1)})$
is Borel measurable jointly in $(\bv,\vv^{(1:t-1)},\bv_{\rm new})$ on the domain
\[
\bigl\{\Pm^\perp_{\vv^{(1:t-1)}}\bv\neq\matr0\bigr\}
\;\cup\;
\bigl\{\Pm^\perp_{\vv^{(1:t-1)}}\bv=\matr0,\ \Pm^\perp_{\vv^{(1:t-1)}}\bv_{\rm new}\neq\matr0\bigr\},
\]
since these are Borel pieces and the defining formula is continuous on each piece. Hence $F$, a finite composition
of continuous operations and of $\mathcal{GS}$, is Borel measurable on a Borel domain $D$. Moreover,
\[
\PP\bigl((\tilde\eta,\Theta)\in D\bigr)=1.
\]
To see this, note that each Gaussian input of $\mathcal{GS}$, i.e., a fresh $\gv^{(t)}$ or $\widetilde\gv^{(t)}$, or
$\bv_{\rm new}^{(k)}$ by its assignment above, is independent of the frame constructed before its call, whose span
has dimension at most $2T'-1<N$; hence its projection onto the orthogonal complement of this span is nonzero almost
surely. We extend $F$ to the complement of $D$ by setting $F(e,z)=\matr0$ there, so that $F$ is Borel measurable
everywhere.

By \eqref{suf} and the transfer theorem \cite[Theorem~8.17]{kallenberg1997foundations}, there exists, on an
enlargement of the underlying probability space, a random element $\eta$ such that
\begin{equation}
(\xi,\Theta,\eta)
\overset d=
(F(\tilde\eta,\Theta),\Theta,\tilde\eta).
\label{eq:transfer-coupling}
\end{equation}
More precisely, $\eta$ may be constructed as $\eta=H(\xi,\Theta,U)$ for a Borel measurable function $H$ and a
uniform random variable $U$ on $[0,1]$ that is independent of the original probability space\cite{kallenberg1997foundations}. In particular,
\eqref{eq:transfer-coupling} gives
\[
(\Theta,\eta)
\overset d=
(\Theta,\tilde\eta),
\]
so the components of $\eta$ are mutually independent standard Gaussian vectors, and $\eta$ is independent of
$\Theta$.

Since $F$ is Borel measurable, the set $G\doteq\{(x,z,e):x=F(e,z)\}$ is Borel, and the right-hand side of
\eqref{eq:transfer-coupling} belongs to $G$. Consequently, $\PP\bigl((\xi,\Theta,\eta)\in G\bigr)=1$, i.e.,
\[
\xi\overset{\rm a.s.}{=}F(\eta,\Theta)\;.
\]
We henceforth use this coupling and denote the components of $\eta$ again by $\gv^{(1:T')}$,
$\widetilde\gv^{(1:T')}$, and $\bv_{\rm new}^{(k)}$. Thus, the original iterates of \eqref{oamp} coincide almost
surely with those of \eqref{hauseholderep} driven by this Gaussian family, which is independent of $\Theta$ but is
not asserted to be independent of $\Om$. 

We can carry out the constructions for all $N\geq2$ together
by adding one uniform random variable $U_N$ on $[0,1]$
for each $N$. These additional variables are mutually
independent and jointly independent of all the original
random variables. Since $N$ ranges over the integers $2,3,4,\ldots$, the
probability that any of the coupling identities fails
is bounded by a countable sum of zero probabilities.
Thus, with probability one, the identity holds for every \(N\geq 2\) simultaneously.

All original random variables, including their dependence
across different $N$, remain unchanged. Although the added
variables $U_N$ are independent, the resulting Gaussian
families for different $N$ need not be independent.
\end{remark}

\subsection{The High-Dimensional Equivalent}\label{step2}

At a high level, we complete the proof of Proposition~\ref{P2} as follows: in the first step (Section~\ref{is}), we apply a relatively simple concentration result, which yields a convenient high-dimensional equivalent—up to $\Opn{1}$ deviations—of the Haar-free equivalent dynamics \eqref{hauseholderep}. In the second step (Section~\ref{theo-dynm}), we explicitly define the final effective high-dimensional dynamics and establish several useful concentration results for it. In the final step (Section~\ref{pbi}), we prove inductively (over the iteration index) that the initially obtained dynamics is equivalent, up to $\Opn{1}$ deviations, to the effective dynamics.

\subsubsection{Initial Simplifications}\label{is}

\begin{lemma}\label{laux1}
For any $t\leq T$ we have
\label{aux1}
\begin{align}
	\widetilde\gammav^{(t)}=\Opn{\sqrt N}\quad \text{and}\quad
\psiv^{(t)}=\Opn{\sqrt N}\;.
	\end{align}
\begin{proof}
First, we note that 
\begin{equation}
\sigma_{\max}(\Em_{A,B})\leq \frac{1}{\chi_N\vert z_{\Am,\Bm}^N(\chi_N)\vert}+1=\Opn{1}. \label{sigmaeab}
\end{equation}
Also, $\gammav^{(0)}=\Op{\sqrt N}$  since $\gammav^{(0)}\sim\mathcal{N}(\matr 0,\tau\Id_N)$.  The claim then follows by induction over the iterations using the arithmetic properties of $\Opn{\kappa}$ in \eqref{arit} applied to \eqref{new_dyn}.
\end{proof}
\end{lemma}

In particular, these results imply that for any $s,t\leq T$,
\begin{subequations}
 \label{aux11}
\begin{align}
\vert\langle \widetilde\vv^{(s)},\widetilde\gammav^{(t)}\rangle\vert&\overset{(a)}{\leq}  \frac{1}{N}\norm{\widetilde\vv^{(s)}}\norm{\widetilde\gammav^{(t)}}\nonumber \\
&=\frac{1}{\sqrt N}\norm{\widetilde\gammav^{(t)}}=\Opn{1},\\
\vert\langle\vv^{(s)},\psiv^{(t)}\rangle\vert&\overset{(b)}{\leq}\frac{1}{N}\norm{\vv^{(s)}}\norm{\psiv^{(t)}}\nonumber \\
&=\frac{1}{\sqrt N}\norm{\psiv^{(t)}}=\Opn{1},
\end{align}
\end{subequations}
where steps (a) and (b) follow from the Cauchy--Schwarz inequality.

\begin{lemma}\label{laux2}
Let $\Pm\in\RR^{N\times N}$ be a possibly random
orthogonal projection of rank $t'$, where $t'$ is fixed
with respect to $N$ and $N>t'$.
Let $\gv\sim\mathcal N(\matr0,\Id_N)$ be independent
of $\sigma(\mathcal F_N,\Pm)$. Then,
\begin{equation}
\vv\doteq
\sqrt{N}\frac{(\Id_N-\Pm)\gv}
{\norm{(\Id_N-\Pm)\gv}}
=\gv+\Op{1}.
\end{equation}
\begin{proof}
Set $\mathcal G_N\doteq\sigma(\mathcal F_N,\Pm)$.
Consider the eigenvalue decomposition $\Pm=\Um^\top {\rm diag}(\matr 1_{t'},\matr 0_{N-t'})\Um$. Condition on $\mathcal G_N$, we have $\gv_{new}\doteq\Um\gv\sim \mathcal N(\matr 0,\Id_N)$ and it is independent of $\mathcal G_N$. Hence, we get
\begin{align}
\norm{\Pm\gv} &=\norm{\gv'}=\Op{1}\\
g_{\perp}\doteq \frac{1}{\sqrt{N}}\norm{(\Id_N-\Pm)\gv}&=\frac{1}{\sqrt{N}}\norm{\gv''}\\
&=\sqrt{1+\Op{N^{-\frac 1 2}}}=1+\Op{N^{-\frac 1 2}}
\end{align}
where $\gv'\sim\mathcal N(\matr0,\Id_{t'})$
and $\gv''\sim\mathcal N(\matr0,\Id_{N-t'})$ and the pair $(\gv',\gv'')$
is independent of $\mathcal G_N$. Hence, 
\begin{align}
\norm{\vv-\gv}^2=N(1-g_{\perp})^2+\norm{\Pm\gv}^2= \Op{1}.
\end{align}
\end{proof}
\end{lemma}
From Lemma~\ref{laux2}, we have for any $t\in[T]$:
\begin{subequations}
 \label{aux22}
\begin{align}
			\vv^{(2t-1)}&= \gv^{(t)}+\Op{1},\\
			\widetilde\vv^{(2t)}&=\widetilde\gv^{(t)}+\Op{1}\;.
		\end{align}
\end{subequations}

Combining \eqref{aux11} and \eqref{aux22} with the arithmetic properties of $\Opn{\kappa}$ in \eqref{arit}, we obtain:
\begin{subequations}
\begin{align}
{\qv}^{(t)}&=
\sum_{1\leq s\leq t}\widehat{{\widetilde{\mathcal B}}}_N^{(s,t)}
\gv^{(s)} +\matr\delta_{q}^{(t)}+\Opn{1}\label{qv},\\
\gammav^{(t)}&=\sum_{1\leq s\leq t}\widehat{{\mathcal B}}_N^{(s,t)}\widetilde\gv^{(s)}+\matr \delta_{\gamma}^{(t)}+\Opn{1}\label{phiv},
\end{align}
where, for all $s\leq t$,
\begin{align}
		\widehat{\widetilde{{\mathcal B}}}_N^{(s,t)}&\doteq \langle \widetilde\vv^{(2s-1)},\widetilde\gammav^{(t)} \rangle,\label{b1}\\
		\widehat{{{\mathcal B}}}^{(s,t)}_N&\doteq \langle \vv^{(2s)},\psiv^{(t)}\rangle,\label{b2}\\
		\matr \delta_{q}^{(t)}&\doteq\sum_{1\leq s< t}\vv^{(2s)}\langle \widetilde\gv^{(s)},\widetilde{\gammav}^{(t)} \rangle, \\
		\matr \delta_{\gamma}^{(t)}&\doteq \sum_{1\leq s\leq t}\widetilde\vv^{(2s-1)}\langle\gv^{(s)},\psiv^{(t)} \rangle.
	\end{align}
\end{subequations}

It is convenient to express the empirical order parameters $\widehat{\widetilde{{\mathcal B}}}_N^{(s,t)}$ and $\widehat{{{\mathcal B}}}_N^{(s,t)}$ via Cholesky decompositions. To this end, we introduce the projected dynamics
\begin{subequations}
 \begin{align}
\widetilde\gammav_\perp^{(t)}&\doteq \left(\Id_N-\frac{1}{N}\sum_{1\leq s<t}\widetilde\vv^{(2s)}(\widetilde\vv^{(2s)})^\top\right)\widetilde\gammav^{(t)},\\
\psiv_\perp^{(t)}&\doteq  \left(\Id_N-\frac{1}{N}\sum_{1\leq s\leq t}\vv^{(2s-1)}(\vv^{(2s-1)})^\top\right)\psiv^{(t)}\;,
\end{align}
\end{subequations}
and define the empirical cross-correlations for $t,s\in[T]$:
\begin{subequations}
 \begin{align}
\widehat{\widetilde{\mathcal C}}_N^{(t,s)}&\doteq
 \langle \widetilde {\gammav}_\perp^{(t)}, \widetilde {\gammav}_\perp^{(s)} \rangle, \\
 \widehat{\mathcal C}_N^{(t,s)}&\doteq\langle{\psiv}_\perp^{(t)},{\psiv}_\perp^{(s)} \rangle \;.
\end{align}
\end{subequations}
In particular, from \eqref{gsd} notice that
\begin{subequations}
 \begin{align}
\widehat{\widetilde{\mathcal B}}_N^{(1:T)}&={\rm chol}(\widehat{\widetilde{\mathcal C}}_N^{(1:T)}),\\
\widehat{\mathcal B}_N^{(1:T)}&={\rm chol}(\widehat{\mathcal C}_N^{(1:T)})\;.\label{ebg2}
\end{align}
\end{subequations}
Here, we recall the \emph{Cholesky decompositions} notation: For any $\mathcal{A}^{(1:t')} \geq \mathbf{0}$, let $\mathcal{B}^{(1:t')}$ be the corresponding $t' \times t'$ upper-triangular factor such that $\mathcal{A}^{(1:t')}
= \big(\mathcal{B}^{(1:t')}\big)^{\top} \mathcal{B}^{(1:t')}$, and write $\mathcal B^{(1:t')}={\rm chol}(\mathcal A^{(1:t')})$.
\subsubsection{The Effective Dynamics}\label{theo-dynm}
 By construction, we have 
\begin{equation}\label{eq:Crec2}
  \mathcal C^{(t+1,s+1)}_N=a_{N}+\rho_N \mathcal C^{(t,s)}_N
  \quad \text{with}~~ \mathcal C^{(0,t)}_N=\tau\delta_{0,t}
\end{equation}
where for short we define $\rho_N \doteq \phi_N(\Em_A^2)\phi_N(\Em_B^2)$ and
$a_N \doteq \phi_N(\Em_A^2)/\chi_N^2$. 
Thus, for any $t,s$,
\begin{equation}\label{eq:Cclosed}
  \mathcal C^{(t,s)}_N \;=\; \tau\rho_N^{\,t}\delta_{ts}
  \;+\; a_N\sum_{j=0}^{\min(t,s)-1}\rho_N^{\,j}\,.
\end{equation}
We then express the second term of \eqref{eq:Cclosed} as the $(t,s)$ entry of the $(T'+1)\times (T'+1)$ matrix
\[
  a_N\sum_{j=0}^{T'-1}\rho_N^{\,j}\mathds{1}_{j<t} \mathds{1}_{j<s}
  \;=a_N\;\left(\sum_{j=0}^{T'-1}\rho_N^{\,j}\,x_jx_j^{\top}\right)_{t,s},
  \quad (x_j)_t\doteq\mathds{1}_{j<t}\;.
\]
 Consequently we have, 
\begin{equation}  
\mathcal C^{(0:T')}_N\;\geq \;\tau\,\diag\big(1,\rho_N,\dots,\rho_N^{\,T'}\big)\geq \tau\min\{1,\rho_N^{T'}\}\Id_{T'+1}. \label{minC}
\end{equation}
\begin{definition} \label{effec_dyn}
We recall the independent Gaussian random vectors $\gv^{(s)}$ and $\widetilde\gv^{(s)}$ in Lemma~\ref{lemmaseq}. Then, starting from $\gammav_{e}^{(0)}\equiv \gammav^{(0)}=\sqrt{\tau}\widetilde\gv^{(0)}$, we construct for $t=1,2,\cdots,T'<N/2$
\begin{subequations}
\begin{align}
\widetilde\gammav_{e}^{(t)}&=\Em_B\gammav_e^{(t-1)}+\sqrt{N}\uv/\chi_N\\
{\qv}_e^{(t)}&=
\sum_{1\leq s\leq t}{{\widetilde{\mathcal B}}}_N^{(s,t)}
\gv^{(s)}\label{qvv}\\
\psiv_{e}^{(t)}&=\Em_A{\qv}_e^{(t)}\\
\gammav_e^{(t)}&= \sum_{1\leq s\leq t}{{\mathcal B}}^{(s,t)}_N\widetilde\gv^{(s)}\;,
\end{align}
\end{subequations}
where the coefficients ${\mathcal B}_N^{(t,s)}$ and $\widetilde{\mathcal B}_N^{(t,s)}$ are defined via the Cholesky decompositions
\begin{subequations}
\label{Bdef}
 \begin{align}
\widetilde{\mathcal B}_N^{(1:T')}&={\rm chol}(\widetilde{\mathcal C}_N^{(1:T')}),\\
\mathcal B_N^{(1:T')}&={\rm chol}(\mathcal C_N^{(1:T')})\;.
\end{align}
\end{subequations}
Here, the matrix $\mathcal C_N^{(0:T')}$ is as constructed in \eqref{C_N}, and we define
\begin{align}
\widetilde{\mathcal C}^{(1:T')}_N\doteq\phi_N(\Em_B^2)\mathcal C_N^{(0:T'-1)}+ \frac{1}{\chi_N^2}11^\top\;,
\end{align}
where $1$ denotes the $T'$-dimensional all-ones vector. Note from \eqref{minC} that the Cholesky decompositions are well defined, and
they are unique provided that neither $\Am$ nor $\Bm$ is proportional to $\Id_N$ and $\tau>0$. 
\end{definition}

Here we note that the two indexings of $\mathcal B_N$ used in this paper are
consistent. Indeed, since $\mathcal C_N^{(0,t)}=\tau\delta_{0t}$, the matrix
$\mathcal C_N^{(0:T')}$ is block diagonal with blocks $\tau$ and
$\mathcal C_N^{(1:T')}$, and hence so is its Cholesky factor:
\[
\mathcal B_N^{(0,0)}=\sqrt\tau,\qquad
\mathcal B_N^{(0,t)}=0 \quad (1\leq t\leq T'),\qquad
\bigl(\mathcal B_N^{(s,t)}\bigr)_{1\leq s,t\leq T'}={\rm chol}(\mathcal C_N^{(1:T')})\;.
\]
Thus the entries $\mathcal B_N^{(s,t)}$ with $1\leq s\leq t$ are unambiguous,
and the effective dynamics defined here agrees with \eqref{eff_dynamics}: the
summand $s=0$ there contributes only at $t'=0$, where it gives
$\gammav_{\rm e}^{(0)}=\sqrt\tau\,\widetilde\gv^{(0)}=\gammav^{(0)}$.

While the effective dynamics is well-defined for any $t<N/2$, in the proof of Proposition~\ref{P2} we only refer to it for $t\leq T$ where $T$ is fixed w.r.t. $N$. In particular, we have in this case
\begin{align}
\label{chp}
\frac{1}{\sigma_{\min}(\mathcal C_{N}^{(1:T)})}&=\Opn{1}\\
\frac{1}{\sigma_{\min}(\widetilde{\mathcal C}_{N}^{(1:T)})}&=\Opn{1}\;.
\end{align}

Next, we verify some useful concentration results for the effective dynamics. We first present the following auxiliary result:
\begin{lemma}\label{Gaussiian-Con}
Let $\Xm=\Xm^\top\in\mathbb{R}^{N\times N}$,
$\mathcal C_N\in\mathbb{R}^{2\times 2}$, and
$\xv\in\mathbb{R}^N$ be $\mathcal F_N$-measurable.
Assume that $\mathcal C_N$ is positive semidefinite and that
\[
\mathcal C_N=\Opn{1},\qquad
\sigma_{\max}(\Xm)=\Opn{1},\qquad
\norm{\xv}=\Opn{\sqrt N}.
\]
Let $\zv_1,\zv_2\sim\mathcal N(\mathbf{0},\Id_N)$
be independent standard Gaussian vectors such that the pair
$(\zv_1,\zv_2)$ is independent of $\mathcal F_N$, and define
\[
[\gammav_1,\gammav_2]
\doteq [\zv_1,\zv_2]\sqrt{\mathcal C_N}.
\]
Then, for all $i,j\in\{1,2\}$,
\begin{align}
\langle\gammav_1,\xv\rangle
&=\Opn{N^{-\frac12}},\\
\langle\gammav_i,\Xm\gammav_j\rangle
&=(\mathcal C_N)_{ij}\phi_N(\Xm)
  +\Opn{N^{-\frac12}}.
\end{align}
\begin{proof}
We prove the second statement only; the proof of the first is analogous and simpler.
Consider the eigenvalue decomposition $\Xm=\Um^\top{\rm diag}(\av)\Um$. By the rotational invariance $(\gammav_1,\gammav_2)\sim (\Um\gammav_1,\Um\gammav_2)$, we may assume without loss of generality that $\Xm={\rm diag}(\av)$. Writing $\mathcal P_1\doteq [1, 0]$ and $\mathcal P_2\doteq [0,1]$, we have
\begin{align}
\langle\gammav_i,\Xm\gammav_j\rangle= \mathcal P_{i}\sqrt{\mathcal C}_N\frac{[\zv_1, \zv_2]^\top{\rm diag}(\av)~[\zv_1, \zv_2]}{N}\sqrt{\mathcal C}_N\mathcal P_j^\top.
\end{align}
For given $i,j\in\{1,2\}$, define
\begin{equation}
  S_N\doteq\langle\zv_i,{\rm diag}(\av)\zv_j\rangle- \phi_N(\Xm)\delta_{ij}=\frac{1}{N}\sum_{n\leq N}a_nS_n,
\end{equation}
where $S_n\equiv z_{ni}z_{nj}-\delta_{ij}$. By the product rule of $\Opn{\kappa}$, we have $S_n=\Op{1}$. 
Conditionally on $\mathcal F_N$, the Gaussian coordinates used
in $S_N$ are independent. Hence using \cite[Lemma 7.8]{erdHos2017dynamical}, we have
\[
\mathbb E[|S_N|^p\mid\mathcal F_N]^{1/p}
\leq C_p\frac{\|\av\|}{N}.
\]
Using $\|\av\|\leq\sqrt N\,\sigma_{\max}(\Xm)$ gives
$S_N=\Opn{N^{-1/2}}$. In other words,
\begin{equation}
\left(\frac{[\zv_1, \zv_2]^\top{\rm diag}(\av)~[\zv_1, \zv_2]}{N}\right)_{ij}=\phi_{N}(\Xm)\delta_{ij}+\Opn{N^{-\frac 1 2}}\;.
\end{equation}
Recalling that $\mathcal C_N=\Opn{1}$, the claim follows from the product rule of $\Opn{\kappa}$.
\end{proof}
\end{lemma}

We recall that $\phi_N(\Em_B)=\phi_N( \Em_{A})=0$. Then, it follows readily from Lemma~\ref{Gaussiian-Con} that
\begin{subequations}
\label{concen}
\begin{align}
\langle\widetilde\gv^{(s)},\widetilde \gammav_e^{(t)}\rangle&= \Opn{N^{-\frac 1 2}},  \label{mem1}\\
\langle\widetilde\gammav_{e}^{(t)},\widetilde\gammav_{e}^{(s)} \rangle&=\widetilde{\mathcal C}_N^{(t,s)} +\Opn{N^{-\frac 1 2}}, \label{Ctilde}\\
\langle\gv^{(s)},\psiv_{e}^{(t)}\rangle&= \Opn{N^{-\frac 1 2}}, \label{mem2}\\
\langle\psiv_{e}^{(t)},\psiv_{e}^{(s)} \rangle&={\mathcal C}_N^{(t,s)}+\Opn{N^{-\frac 1 2}} \;.\label{C}
\end{align}
\end{subequations}

\subsubsection{Proof by Induction}\label{pbi}
Let ${\mathcal H}_{t}$ denote the hypothesis that
\begin{subequations}
 \begin{align}
 \widehat{\widetilde{\mathcal B}}_N^{(1:t)}&=
\widetilde{\mathcal B}_N^{(1:t)}+\Opn{N^{-\frac 1 2}},\quad
\matr \delta_{q}^{(1:t)}= \Opn{1},\label{H21}\\
 \widehat {{\mathcal B}}_N^{(1:t)}&={\mathcal B}_N^{(1:t)}+ \Opn{N^{-\frac 1 2}},\quad \matr \delta_{\gamma}^{(1:t)}=\Opn{1}\;.  \label{H22}
	\end{align}
\end{subequations}
We prove the result by verifying the induction step ${\mathcal H}_{t-1} \implies {\mathcal H}_{t}$ and the base case ${\mathcal H}_1$. We first address an induction-step result from which the remainder follows easily.

\begin{lemma}
\label{lemchol1}
Let $\widehat{\mathcal C}\geq\matr 0,\mathcal C>\matr 0$ be $T\times T$ matrices with $\widehat{\mathcal C}=\Opn{1}$ and $1/\sigma_{\min}(\mathcal C)=\Opn{1}$. Suppose $\widehat{\mathcal C}-\mathcal C=\Opn{N^{-c}}$ for some constant $c>0$. Note that ${\rm chol}(\widehat{\mathcal C})$ may not be unique since $\widehat{\mathcal C}$ may be singular; for any such factor, we have the concentration ${\rm chol}(\widehat{\mathcal C})-{\rm chol}({\mathcal C})=\Opn{N^{-c}}$\;.
\begin{proof}
The proof follows from the arithmetic properties of $\Opn{\kappa}$ in~\eqref{arit} applied recursively to the system equations of the Cholesky decompositions for $\mathcal {\widehat C}$ and $\mathcal C$: for $\mathcal B = \operatorname{chol}(\mathcal C)$, the entries of $\mathcal B$ satisfy
\[
\mathcal B^{(s,t)} \mathcal B^{(s,s)}
=
\mathcal C^{(t,s)}
-
\sum_{s'=1}^{s-1}
\mathcal B^{(s',t)} \mathcal B^{(s',s)}\;,
\]
and $\widehat{\mathcal B}$ satisfies the same equations with $\mathcal C$ replaced by $\widehat{\mathcal C}$. In particular, we note that
$|\mathcal B^{(s,t)}|\leq(\mathcal C^{(t,t)})^{1/2}=\Opn{1}$ and $|\widehat{\mathcal B}^{(s,t)}|\leq(\widehat{\mathcal C}^{(t,t)})^{1/2}=\Opn{1}$,
and that $(\mathcal B^{(s,s)})^{2}\geq\sigma_{\min}(\mathcal C)$, so that $1/\mathcal B^{(s,s)}=\Opn{1}$ by the assumption
$1/\sigma_{\min}(\mathcal C)=\Opn{1}$.
\end{proof}
 \end{lemma}
We note from \eqref{aux11} that $\widehat{\mathcal C}_N^{(1:T)}=\Opn{1}$ and $\widehat{\widetilde{\mathcal C}}_N^{(1:t)}=\Opn{1}$.
Applying Lemma~\ref{lemchol1}, we obtain for any $t\leq T$:
\begin{subequations}
\label{chol}
\begin{align}
\widehat{\widetilde{\mathcal C}}_N^{(1:t)}-\widetilde {\mathcal  C}_N^{(1:t)}=\Opn{N^{-\frac 1 2}}&\implies \widehat{\widetilde{\mathcal B}}_N^{(1:t)}-\widetilde {\mathcal B}_N^{(1:t)}=\Opn{N^{-\frac 1 2}}\;,\label{chol2}\\
\widehat{\mathcal C}_N^{(1:t)}-\mathcal C_N^{(1:t)}= \Opn{N^{-\frac 1 2}} &\implies\widehat{\mathcal B}_N^{(1:t)}-\mathcal B^{(1:t)}_N=\Opn{N^{-\frac 1 2}}\;.\label{chol1}
\end{align}
\end{subequations}

Now, let us verify the induction step ${\mathcal H}_{t-1}\implies {\mathcal H}_{t}$ for $t>1$: From  $\gammav^{(t-1)}=\gammav_e^{(t-1)}+\Opn{1}$ and $\sigma_{\max}(\Em_B)=\Opn{1}$, we have
\begin{equation}
\widetilde\gammav^{(t)}=\widetilde\gammav_{e}^{(t)}+\Opn{1}.
\end{equation}
Hence, from \eqref{mem1}, for any $1\leq s\leq t$,
\begin{align}
\langle\widetilde\gv^{(s)},\widetilde \gammav^{(t)}\rangle&=\Opn{N^{-\frac 1 2}} \;.\label{mem1r}
\end{align}
This implies $\deltav_{q}^{(t)}= \Opn{1}$.
Moreover, \eqref{mem1r} and \eqref{aux22} together imply $\langle\widetilde\vv^{(2s)},\widetilde \gammav^{(t)}\rangle=\Opn{N^{-1/2}}$. Thus,
\begin{align}
\widetilde\gammav_\perp^{(t)}= \widetilde\gammav^{(t)}+\Opn{1}= \widetilde\gammav_{e}^{(t)}+\Opn{1}\;.
\end{align}
Applying \eqref{Ctilde} then gives
\begin{align}
\widehat{\widetilde{\mathcal C}}_N^{(1:t)}- {\widetilde{\mathcal C}_N^{(1:t)}}=\Opn{N^{-\frac 1 2}} \;,
\end{align}
and from \eqref{chol2}, we have
\begin{align}
\widehat{\widetilde{\mathcal B}}_N^{(1:t)}=\widetilde{\mathcal B}^{(1:t)}_N+\Opn{N^{-\frac 1 2}} \;.
\end{align}
This verifies that $\mathcal H_{t-1}$ implies \eqref{H21} for $t$.

Now, given that \eqref{H21} holds for all $s\leq t$, from $\sigma_{\max}(\Em_A)=\Opn{1}$ and $\qv^{(t)}=\qv_e^{(t)}+\Opn{1}$, we have
\begin{equation}
\psiv^{(t)}= \psiv_{e}^{(t)}+\Opn{1}.
\end{equation}
Hence, from \eqref{mem2}, for any $1\leq s\leq t$,
\begin{align}
\langle\gv^{(s)},\psiv^{(t)}\rangle&= \Opn{N^{-\frac 1 2}}\;,\label{mem2r}
\end{align}
which implies $\deltav_{\gamma}^{(t)}=\Opn{1}$.
Moreover, \eqref{mem2r} and \eqref{aux22} give $\langle\vv^{(2s-1)},\psiv^{(t)}\rangle=\Opn{N^{-1/2}}$, so that
\begin{align}
\psiv_{\perp}^{(t)}= \psiv^{(t)}+\Opn{1}=\psiv_{e}^{(t)}+\Opn{1}\;.
\end{align}
Applying \eqref{C} then gives
\begin{align}
\widehat{{\mathcal C}}^{(1:t)}_N- {{\mathcal C}^{(1:t)}_N}= \Opn{N^{-\frac 1 2}}\;,
\end{align}
and from \eqref{chol1} we have
\begin{align}
\widehat{{\mathcal B}}_N^{(1:t)}={\mathcal B}_N^{(1:t)}+\Opn{N^{-\frac 1 2}}\;.
\end{align}
This completes the induction step $\mathcal H_{t-1}\implies \mathcal H_{t}$.

For the base case $\mathcal{H}_1$, we note that $\gammav_e^{(0)}\equiv \gammav^{(0)}$, and thus
\begin{align}
 \widehat{\widetilde{\mathcal{B}}}_N^{(1,1)} = \frac{\norm{\widetilde\gammav^{(1)}}}{\sqrt N}=\sqrt{\widetilde{\mathcal C}_N^{(1,1)}+\Opn{N^{-\frac 1 2}}}={\widetilde{\mathcal{B}}}_N^{(1,1)}+\Opn{N^{-\frac 1 2}}.\label{H11}
\end{align}
Hence \eqref{H21} holds for $t=1$. Since we have already shown that \eqref{H21} for $t>0$ implies \eqref{H22} for $t$, the proof of Proposition~\ref{P2} is complete.
\section{Outlook}\label{outlook}
An extension of our work would consider (higher-order) deterministic equivalents for
matrices involving products of multiple copies of the inverse
$(\Om^\top \Am \Om + \Bm)^{-1}$ (see e.g.\ \cite{couillet2022random}), the simplest
case being $(\Om^\top \Am \Om + \Bm)^{-2}$. We expect that such a generalization is
possible by considering the joint statistics of multiple vectors of the type
\eqref{gammav0} with the same random Haar matrix but different vectors $\uv$.

{A further natural extension is to consider the resolvent of the additive model
$(\Om^\top\Am\Om+\Bm-z\Id_N)^{-1}$ at a spectral parameter $z$ in the complex upper
half-plane. We have deliberately not pursued this here, as our aim has been to
transfer the dynamical mean-field approach in a simpler (yet relevant) setting. We
expect that the essential difficulty in the complex-valued case lies in the
stability criterion $\rho<1$, where
\begin{align}
\rho\overset{\rm a.s.}{=} \lim_{N\to \infty}\frac{1}{N}\norm{\Em_{A}}_{\rm F}^2\,
\frac{1}{N}\norm{\Em_{B}}_{\rm F}^2 ,
\end{align}
which drives the geometric convergence of the dynamics and which we expect to hold
for sufficiently small $\Im z$, but not uniformly in $z$.

It will be important to see whether our dynamical approach can establish
deterministic equivalents under more universal assumptions on the random matrices.
This would not only be of mathematical interest but might also be relevant for
applications of random matrix theory to signal processing and machine learning. A
promising direction would be to replace the Haar ensemble by semi-random matrices
such as randomly signed Hadamard matrices
\cite{anderson2014asymptotically,au2021freeness}. This is motivated by recent
progress in analyzing the dynamics of message passing algorithms involving random
matrices, which has revealed a greater universality
\cite{wang2024universality,dudeja2023universality} of results.
\appendix
\section {Proof of Lemma~\ref{lemma_init}}\label{App_DD}
Since the empirical eigenvalue distribution of \( \mathbf A \) converges almost surely to a limiting distribution with compact support as \( N \to \infty \), it follows that \( s_{\mathbf A}^N(z) \overset{\rm a.s.}{\to} s_{\mathbf A}(z) \) for all \( z < 0 \). While pointwise convergence of the Stieltjes transform does not directly imply pointwise convergence of its inverse \( z_{\mathbf A}^N(s) \), we note that
\begin{align}
\frac{\partial s_{\mathbf A}^N(z)}{\partial z}=
\phi_N((\Am-z\Id_N)^{-2})\geq  (s_{\mathbf A}^N(z))^2
\end{align}
Then, by the mean-value theorem, for $\tilde z_N=(1-c_N)z_{\Am}^N(s)+c_Nz_{\Am}(s)$ for some $c_N\in[0,1]$, we have
\begin{align}
\vert z_{\Am}^N(s)-z_{\Am}(s)\vert&\leq \frac{1}{(s_{\mathbf A}^N(\tilde z_N))^2}\vert s-s_{\Am}^N(z_{\Am}(s))\vert\\\
&\leq  \frac{1}{(s_{\mathbf A}^N(-1/s))^2}\vert s-s_{\Am}^N(z_{\Am}(s))\vert\;,
\end{align}
where the second inequality uses $\vert {\rm z}^N_{\Am}(s)\vert,\vert{\rm z}_{\Am}(s)\vert \leq 1/s$ and the monotonicity property
\begin{equation}
s_{\mathbf A}^N(z)<s_{\mathbf A}^N(z')\quad\text{for}\quad  z<z'<0\;.
\end{equation} 
Note also that
\[\lim_{N\to\infty}s_{\Am}^N(z_{\Am}(s))\overset{\rm a.s.}{=}s\;.\]
Thus, we have
\begin{subequations}
\label{coninverse}
\begin{align}
z_{\mathbf A}(s) &\overset{\rm a.s.}{=} \lim_{N \to \infty} z_{\mathbf A}^{N}(s), \quad 0<s< \mu_{\Am^{-1}},
\\
z_{\mathbf B}(s) &\overset{\rm a.s.}{=}\lim_{N \to \infty} z_{\mathbf B}^{N}(s),\quad 0<s<\mu_{\Bm^{-1}},
\end{align}
\end{subequations}
where, e.g., $\mu_{\Am^{-1}}\doteq\int x^{-1}{\rm dF}_{\Am}(x)$.

We next note the bound 
\begin{align}
\vert z_{\Am}^N(\chi_N)-z_{\Am}(\chi)\vert&\leq\vert z_{\Am}^N(\chi)-z_{\Am}(\chi)\vert +\vert z_{\Am}^N(\chi_N)-z_{\Am}^N(\chi)\vert\\
&\leq \vert z_{\Am}^N(\chi)-z_{\Am}(\chi)\vert+\frac{\vert\chi_N-\chi\vert}{\min(\chi,\chi_N)^2}\label{invbound}
\end{align}
where the second step follows from the mean-value theorem together with the bound 
\begin{align}
\frac{\partial z_{\Am}^N(s)}{\partial s}&=\frac{1}{\phi_N((\Am-z_{\Am}^N(s)\Id_N)^{-2})}\\
&\leq \frac{1}{(s_{\mathbf A}^N(z_{\Am}^N(s)))^2}=\frac{1}{s^2}\;.
\end{align}

We recall that $\lim_{N\to \infty}\chi_N\overset{\rm a.s.}{=}\chi$ and
$\chi<\min(\mu_{\Am^{-1}},\mu_{\Bm^{-1}})$. Then, from \eqref{coninverse} and \eqref{invbound} we have
  \begin{align}\label{coninversechi}
\lim_{N \to \infty} z_{\Am,\Bm}^{N}(\chi_N)\overset{\rm a.s.}{=}z_{\Am,\Bm}(\chi)\;.
\end{align}
Hence, we have verified that 
\begin{align}
\lim_{N\to \infty}\epsilon_N^\star\overset{\rm a.s.}{=}0\;.
\end{align}

Now, for $\Cm=\Om^\top\Am\Om+\Bm$ we have
\begin{align}
    \mathds{1}_{\mathcal E_N}(\mathbf{C} -\epsilon_N^\star \Id_N)^{-1} - \mathds{1}_{\mathcal E_N}\mathbf{C}^{-1} = \mathds{1}_{\mathcal E_N}\epsilon_N^\star \mathbf{C}^{-1}(\mathbf{C} -\epsilon_N^\star \Id_N)^{-1}.
\end{align}
Hence, we have
\begin{align}
    \limsup_{N \to \infty} \sigma_{\max}(\mathds{1}_{\mathcal E_N}(\mathbf{C} - \epsilon_N^\star \Id_N)^{-1} - \mathds{1}_{\mathcal E_N}\mathbf{C}^{-1}) \leq \limsup_{N \to \infty}\frac{\vert\epsilon_N^\star\vert}{\sigma_{\min}(\Am)^2} \overset{\rm a.s.}{=} 0.\label{okay}
\end{align}
The proof of \eqref{DDc} is completed by applying an analogous argument to the second term involving $\Bm$.
\section{Proof of Lemma~\ref{Cequivalence}}\label{Proof_Cequivalence}
For short, we denote the limiting normalized trace of a matrix $\Am\in \RR^{N\times N}$ by
\begin{align}
\phi(\Am)\doteq \lim_{N\to\infty}\phi_N(\Am)
\end{align}
whenever the limit exists almost surely. Furthermore, we define the auxiliary matrices
\begin{align}
    \widetilde{\Em}_A &\doteq \frac{1}{\chi}(\Am - z_{\Am}(\chi)\Id_N)^{-1} - \Id_N,\\
     \widetilde{\Em}_B &\doteq \frac{1}{\chi}(\Bm - z_{\Bm}(\chi)\Id_N)^{-1} - \Id_N.
\end{align}
We proceed in two steps.
First, we verify that for any $s\leq t$,
\begin{align}
\mathcal{C}^{(t+1,s+1)} &\overset{\rm a.s.}{=} \phi(\widetilde{\Em}_A ^2)\phi(\widetilde{\Em}_B^2)\mathcal{C}^{(t,s)} + \phi(\widetilde{\Em}_A^2)/\chi^2.
\end{align}
Second, we establish the limiting equivalences
\begin{align}
\phi({\Em}_A^2)&\overset{\rm a.s.}{=}\phi(\widetilde{\Em}_A ^2),\label{step21}\\
\phi({\Em}_B ^2)&\overset{\rm a.s.}{=}\phi(\widetilde{\Em}_B^2)\label{step22}.
\end{align}
Together, these imply the claim.

For the first step, we use the following auxiliary result.
\begin{lemma}\label{iliked}
Consider a random variable $X>0$ with $\mathbb E[X^{-2}]<\infty$. Then,
\begin{equation}
\mathbb E [X^{-2}]=\frac{\mathbb E[X^{-1}]^2}{1-\mathbb E[X^{-1}]^2{\rm R}_{X}'(-\mathbb E[X^{-1}])}
\end{equation}
where ${\rm R}_{X}'$ denotes the derivative of the R-transform of the distribution of $X$.
\begin{proof}
Starting from the identity ${\rm R}_X(-s_X(z))=z+1/s_X(z)$ and differentiating both sides w.r.t. $z$ yields
\begin{equation}
s_X'(z)
=
\frac{s_X(z)^2}
{1 - s_X(z)^2 {\rm R}_X'(-s_X(z))},\quad  z<0.
\end{equation}
Since $X>0$, monotone convergence gives
\begin{align}
\left(\mathbb{E}[X^{-1}],\, \mathbb{E}[X^{-2}]\right)
=
\lim_{z \to 0^-}
\left(s_X(z),\, s_X'(z)\right).
\end{align}
The identity ${\rm R}_X'(-s_X(z))=s_X(z)^{-2}-s_X'(z)^{-1}$ then shows that
${\rm R}_X'(-\mathbb E[X^{-1}])\doteq\lim_{\omega\to-\mathbb E[X^{-1}]^+}{\rm R}_X'(\omega)$ exists and equals
$\mathbb E[X^{-1}]^{-2}-\mathbb E[X^{-2}]^{-1}$, which is the claim.
\end{proof}
\end{lemma}
Applying Lemma~\ref{iliked}, we obtain
\begin{subequations}
\label{nice}
\begin{align}
\eta&= \frac{\chi^2}{1-\chi^2{\rm R}_{\Om^\top\Am\Om+\Bm}'(-\chi)},\\
\phi(\widetilde\Em_A^2)&\overset{\rm a.s.}{=} \frac{\chi^2{\rm R}_{\Am}'(-\chi)}{1-\chi^2{\rm R}_{\Am}'(-\chi)},\\
\phi(\widetilde\Em_B^2)&\overset{\rm a.s.}{=}  \frac{\chi^2{\rm R}_{\Bm}'(-\chi)}{1-\chi^2{\rm R}_{\Bm}'(-\chi)}\;.
\end{align}
\end{subequations}
Since $\Om^\top\Am\Om$ and $\Bm$ are asymptotically free, the additivity of the R-transform gives
\begin{equation}
{\rm R}'_{\Om^\top\Am\Om+\Bm}(-\chi)={\rm R}'_{\Am}(-\chi)+{\rm R}'_{\Bm}(-\chi)\;,
\end{equation}
where the left-hand side is the one-sided limit of Lemma~\ref{iliked}, and the right-hand side is
continuous at $-\chi$ since $-\chi$ is interior to the domains of ${\rm R}_{\Am}$ and ${\rm R}_
{\Bm}$. Hence, the first identity in \eqref{nice} reads
\begin{equation}\label{etaid}
\frac{1}{\eta}=\frac{1}{\chi^2}-{\rm R}'_{\Am}(-\chi)-{\rm R}'_{\Bm}(-\chi)\;.
\end{equation}
Writing for short ${\rm R}'_{\Am}\equiv{\rm R}'_{\Am}(-\chi)$ and
${\rm R}'_{\Bm}\equiv{\rm R}'_{\Bm}(-\chi)$, we obtain from \eqref{etaid} that
$\frac{1}{\eta}+{\rm R}'_{\Am}=\frac{1}{\chi^2}-{\rm R}'_{\Bm}$, and hence
\begin{equation}\label{crossed}
\frac{{\rm R}'_{\Am}}{\frac 1\eta+{\rm R}'_{\Am}}
=\frac{\chi^2{\rm R}'_{\Am}}{1-\chi^2{\rm R}'_{\Bm}}\;,
\qquad
\frac{{\rm R}'_{\Bm}}{\frac 1\eta+{\rm R}'_{\Bm}}
=\frac{\chi^2{\rm R}'_{\Bm}}{1-\chi^2{\rm R}'_{\Am}}\;.
\end{equation}
Multiplying the two identities in \eqref{crossed} and comparing with \eqref{nice}
gives
\begin{equation}
\rho\overset{\rm a.s.}{=}\phi(\widetilde\Em_A^2)\,\phi(\widetilde\Em_B^2)\;,
\end{equation}
Similarly, it follows that 
\begin{align}
\tau&\overset{\rm a.s.}{\equiv} \frac 1{\chi^2}\frac{\phi(\widetilde\Em_A^2)}{1-\rho}\;.
\end{align}
This verifies the first step.

For the second step, we recall that $\phi_N(\Em_A)=0$ (and $\phi_N(\Em_B)=0$). Hence,
\begin{equation}
    \phi_N(\Em_A^2) = \frac{1}{\chi_N^2} \phi_N\left((\Am - z_{\Am}^{N}(\chi_N)\Id_N)^{-2}\right) - 1.
\end{equation}
Let $d_i$ denote the eigenvalues of $(\Am - z_{\Am}(\chi)\Id_N)$ and set $\epsilon_N \doteq z_{\Am}(\chi) - z^N_{\Am}(\chi_N)$. From \eqref{coninversechi} we note that $\epsilon_N \overset{\rm a.s.}{\rightarrow} 0$ as $N\to \infty$. Then,
\begin{align}
    |(d_i + \epsilon_N)^{-2} - d_i^{-2}| = \frac{|\epsilon_N(\epsilon_N + 2d_i)|}{d_i^2(d_i + \epsilon_N)^2}.
\end{align}
Since $\Am>\matr 0$ and ${z}_{\Am}(\chi)<0$, we have $d_i\geq -{z}_{\Am}(\chi)$, which gives the uniform lower bound $d_i+\epsilon_N \geq \vert{\rm z}^N_{\Am}(\chi_N)\vert$. Hence,
\begin{align}
\vert (d_i+\epsilon_N)^{-2}- (d_i)^{-2}\vert\leq \frac{\vert\epsilon_N\vert[\vert\epsilon_N\vert+2(\sigma_{\max}(\Am)+
\vert{z}_{\Am}(\chi)\vert)]}{{ z}_{\Am}(\chi)^2{z}^N_{\Am}(\chi_N)^2}.
\end{align}
This uniform bound yields \eqref{step21}. The proof of \eqref{step22} follows by an identical argument.

\section{Proof of Lemma~\ref{lemmaseq}}\label{Proof_HD}
We use the following inductive rule for constructing Haar orthogonal matrices.
\begin{lemma}[\cite{meckes2019random}\cite{lu2021householder}]\label{conditioning}
Let $\vv^{(1)}=\sqrt{N}\gv^{(1)}/\norm{\gv^{(1)}}$ with
$\gv^{(1)}\sim\mathcal {N}(\matr 0,\Id_N)$, and let a random vector $\widetilde\vv^{(1)}\in \RR^{N}$ with $\norm{\widetilde\vv^{(1)}}=\sqrt{N}$
and a Haar matrix ${\Om_{N-1}}\in \RR^{(N-1)\times (N-1)}$ be mutually independent.
        Then,
		\begin{align}
			\Om&\doteq\frac{1}{N}\vv^{(1)}(\widetilde\vv^{(1)})^\top+\matr\Pi_{\vv^{(1)}}^\perp\Om_{N-1}(\matr\Pi_{\widetilde\vv^{(1)}}^\perp)^\top\label{haarrep}
		\end{align}
		is Haar-distributed and independent of $\widetilde\vv^{(1)}$. Here, $\matr\Pi_{\widetilde\vv^{(1)}}^\perp, \matr\Pi_{\vv^{(1)}}^\perp\in \RR^{N\times(N-1)}$ are semi-orthogonal matrices whose columns span the orthogonal complements of $\widetilde\vv^{(1)}$ and $\vv^{(1)}$, respectively; for example,
		\[\matr\Pi_{\vv^{(1)}}^\perp(\matr\Pi_{\vv^{(1)}}^\perp)^\top=\Id-\frac{1}{N}\vv^{(1)} (\vv^{(1)})^\top=\Pm_{\vv^{(1)}}^\perp.\]
	\end{lemma}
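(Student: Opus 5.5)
The plan is to show that, conditionally on $\widetilde\vv^{(1)}$, the law of $\Om$ in \eqref{haarrep} is invariant under left multiplication by every fixed orthogonal matrix. Since the Haar probability measure on the orthogonal group is the unique left-invariant probability measure, this shows that the conditional law of $\Om$ given $\widetilde\vv^{(1)}$ is Haar. That conditional law does not depend on the value of $\widetilde\vv^{(1)}$, so both claims, Haar distribution and independence of $\widetilde\vv^{(1)}$, follow at once.

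\textbf{Step 1: $\Om$ is orthogonal.} I take the semi-orthogonal frames $\matr\Pi^\perp_{\vv}$ to be Borel-measurable functions of $\vv$ on the sphere of radius $\sqrt N$; this is always possible, for instance by Gram--Schmidt with a fixed tie-breaking rule. Then I check $\Om^\top\Om=\Id_N$ directly, using three facts:
\begin{itemize}
\item $(\vv^{(1)})^\top\matr\Pi^\perp_{\vv^{(1)}}=\matr 0$;
\item $(\matr\Pi^\perp_{\vv^{(1)}})^\top\matr\Pi^\perp_{\vv^{(1)}}=\Id_{N-1}$;
\item $\frac1N\widetilde\vv^{(1)}(\widetilde\vv^{(1)})^\top+\matr\Pi^\perp_{\widetilde\vv^{(1)}}(\matr\Pi^\perp_{\widetilde\vv^{(1)}})^\top=\Id_N$.
\end{itemize}
With these, the cross terms vanish and the two diagonal blocks sum to the identity.

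\textbf{Step 2: left invariance.} Fix an orthogonal $\Matr W$ and condition on $\widetilde\vv^{(1)}$. Then
\[
\Matr W\Om=\tfrac1N(\Matr W\vv^{(1)})(\widetilde\vv^{(1)})^\top+\Matr W\matr\Pi^\perp_{\vv^{(1)}}\Om_{N-1}(\matr\Pi^\perp_{\widetilde\vv^{(1)}})^\top .
\]
The columns of $\Matr W\matr\Pi^\perp_{\vv^{(1)}}$ form an orthonormal basis of the orthogonal complement of $\Matr W\vv^{(1)}$. Hence there is an $(N-1)\times(N-1)$ orthogonal matrix $\Matr Q=\Matr Q(\Matr W,\vv^{(1)})\doteq(\matr\Pi^\perp_{\Matr W\vv^{(1)}})^\top\Matr W\matr\Pi^\perp_{\vv^{(1)}}$, depending measurably on $\vv^{(1)}$, such that $\Matr W\matr\Pi^\perp_{\vv^{(1)}}=\matr\Pi^\perp_{\Matr W\vv^{(1)}}\Matr Q$. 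Two facts now apply:
\begin{itemize}
\item By rotation invariance of $\gv^{(1)}$, we have $\Matr W\vv^{(1)}\overset{d}{=}\vv^{(1)}$.
\item $\Om_{N-1}$ is independent of $(\vv^{(1)},\widetilde\vv^{(1)})$, so conditionally on $(\vv^{(1)},\widetilde\vv^{(1)})$ the matrix $\Matr Q\Om_{N-1}$ is again Haar. Hence the pair $(\Matr W\vv^{(1)},\Matr Q\Om_{N-1})$ has the same joint law as $(\vv^{(1)},\Om_{N-1})$ and is independent of $\widetilde\vv^{(1)}$.
\end{itemize}
Therefore $\Matr W\Om$ is the same measurable function of $(\text{pair},\widetilde\vv^{(1)})$ as $\Om$ is of $(\vv^{(1)},\Om_{N-1},\widetilde\vv^{(1)})$. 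It follows that $\Matr W\Om\overset{d}{=}\Om$ conditionally on $\widetilde\vv^{(1)}$.

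\textbf{Step 3: conclusion.} By uniqueness of the Haar probability measure on the orthogonal group, the conditional law of $\Om$ given $\widetilde\vv^{(1)}$ is Haar for almost every value of $\widetilde\vv^{(1)}$. Since this conditional law is the same for every value, $\Om$ is Haar and independent of $\widetilde\vv^{(1)}$.

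I expect the main obstacle to be Step 2, specifically the bookkeeping of the frame choice. One must argue that replacing $\matr\Pi^\perp_{\Matr W\vv^{(1)}}$ by $\Matr W\matr\Pi^\perp_{\vv^{(1)}}$ only introduces the right rotation $\Matr Q$, and that $\Matr Q$ is measurable in $\vv^{(1)}$, so that it can be absorbed into $\Om_{N-1}$ by conditioning. The other steps are routine.
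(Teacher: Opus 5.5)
Your proof is correct. The paper does not prove this lemma itself but imports it from the cited references, and your argument (left invariance of the law of $\Om$ with $\widetilde\vv^{(1)}$ held fixed, plus uniqueness of Haar measure) is the standard route to it: a one-step version of the column-by-column construction of Haar measure.

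The only point to tighten is Step~3. There the exceptional null set behind ``$\mathbf W\Om\overset{d}{=}\Om$ conditionally on $\widetilde\vv^{(1)}$'' a priori depends on $\mathbf W$, so you cannot directly invoke invariance under all $\mathbf W$ at once. Either of two fixes removes the issue:
\begin{enumerate}
\item Run Step~2 with $\widetilde\vv^{(1)}$ replaced by an arbitrary deterministic vector $\mathbf w$ of norm $\sqrt N$; your argument goes through verbatim and has no null sets. Then conclude using the independence of $\widetilde\vv^{(1)}$ from $(\vv^{(1)},\Om_{N-1})$.
\item Average the joint identity $(\mathbf W\Om,\widetilde\vv^{(1)})\overset{d}{=}(\Om,\widetilde\vv^{(1)})$ over an independent Haar $\mathbf W$, and apply Fubini together with right invariance of Haar measure. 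This gives both the Haar distribution and the independence at once.
\end{enumerate}
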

Throughout this appendix, the normalization in $\mathcal{GS}$
is always $\sqrt N$, where $N$ is the dimension of the original
matrix, even when the argument lies in a residual space
$\mathbb R^d$ with $d<N$. In particular, for a nonzero
$\xv\in\mathbb R^d$,
\[
\mathcal{GS}(\xv)=\sqrt N\,\xv/\|\xv\|.
\]
The corresponding rank-one Haar decomposition in dimension $d$
uses $N^{-1}\av\bv^\top$ when both $\av$ and $\bv$ have norm
$\sqrt N$. This is the usual unit-vector decomposition written
with rescaled vectors.

We begin with the iteration step $t=1$. Applying Lemma~\ref{conditioning} with
$\widetilde\vv^{(1)}\equiv\mathcal{GS}(\widetilde\gammav^{(1)})$, we obtain
\begin{equation}
\Om \widetilde\gammav^{(1)}=\langle\widetilde\vv^{(1)},\widetilde\gammav^{(1)} \rangle \vv^{(1)}.
\end{equation}
For the multiplication $\Om^\top \psiv^{(1)}$, we introduce an independent Gaussian vector $\widetilde\gv^{(1)}\sim\mathcal {N}(\matr 0,\Id_N)$. Note that 
	\[(\matr\Pi_{\widetilde\vv^{(1)}}^\perp)^\top\widetilde\gv^{(1)} \sim \mathcal {N}(\matr 0,\Id_{N-1})\]
	and is independent of $\widetilde\vv^{(1)}$. Let $\vv^{(2)}\equiv \GS{\psiv^{(1)}}{\vv^{(1)}}$. Then, we use Lemma~\ref{conditioning} to represent
$\Om_{N-1}$ in \eqref{haarrep} as
	\begin{align}
		\Om_{N-1}=\frac{1}{N}\mathcal{GS}((\matr\Pi_{\vv^{(1)}}^\perp)^\top \vv^{(2)})\mathcal{GS}((\matr\Pi_{\widetilde\vv^{(1)}}^\perp)^\top\widetilde\gv^{(1)})^\top +\matr\Pi_{(\matr\Pi_{\vv^{(1)}}^\perp)^\top \vv^{(2)}}^\perp
		\Om_{N-2}\left(\matr\Pi_{(\matr\Pi_{\widetilde\vv^{(1)}}^\perp)^\top\widetilde\gv^{(1)}}^\perp \right)^\top,
	\end{align}
	where $\Om_{N-2}\in \RR^{(N-2)\times (N-2)}$ is Haar orthogonal.
	Moreover, observe that
	\begin{align}
		\matr\Pi_{\vv^{(1)}}^\perp\mathcal{GS}((\matr\Pi_{\vv^{(1)}}^\perp)^\top\vv^{(2)})&\equiv \vv^{(2)},\label{gset}\\
		\matr\Pi_{\widetilde\vv^{(1)}}^\perp\mathcal{GS}((\matr\Pi_{\widetilde\vv^{(1)}}^\perp)^\top\widetilde\gv^{(1)})
		&=\underbrace{\GS{\widetilde\gv^{(1)}}{\widetilde\vv^{(1)}}}_{\doteq \widetilde \vv^{(2)} }\;.
	\end{align}
	Hence, we have the representation
	\begin{align}\label{usethis}
		\Om=\frac{1}{N} \vv^{(1)}(\widetilde\vv^{(1)})^\top  +\frac{1}{N}\vv^{(2)}(\widetilde\vv^{(2)})^{\top}+\underbrace{\left(\matr\Pi_{\vv^{(1)}}^\perp\matr\Pi_{(\matr\Pi_{\vv^{(1)}}^\perp)^\top\vv^{(2)}}^\perp \right)}_{\doteq \matr\Pi^\perp_{\vv^{(1:2)}}}
		\Om_{N-2}{\underbrace{\left(\matr\Pi_{\widetilde\vv^{(1)}}^\perp\matr\Pi_{(\matr\Pi_{\widetilde\vv^{(1)}}^\perp)^\top\widetilde\gv^{(1)}}^\perp \right)}_{\doteq \matr\Pi^\perp_{\widetilde\vv^{(1:2)}}}}^\top\;.
	\end{align}
	One can verify from \eqref{gset} that $\matr\Pi^\perp_{\vv^{(1:2)}}\in \RR^{N\times (N-2)}$ is semi-unitary, satisfying $\matr\Pi^\perp_{\vv^{(1:2)}}(\matr\Pi^\perp_{\vv^{(1:2)}})^\top=\Pm^\perp_{\vv^{(1:2)}}$.
	From \eqref{usethis} we then obtain
	\begin{equation}
\Om^\top\psiv^{(1)}=\langle\vv^{(1)},\psiv^{(1)}\rangle\widetilde\vv^{(1)} +\langle\vv^{(2)},\psiv^{(1)}\rangle\widetilde \vv^{(2)}\;,
	\end{equation}
completing the $\Om$-free representation for the first iteration step $t=1$.

	We now address the second iteration step, using notation compatible with the general case $t>1$. We generate independent Gaussian vectors
	\[\widetilde\gv^{(t)}\sim\gv^{(t)}\sim\mathcal {N}(\matr 0,\Id_N),\]
	and construct 
	\begin{align}
		\widetilde\vv^{(2t-1)}&=\mathcal{GS}(\widetilde\gammav^{(t)}\vert \widetilde\vv^{(1:2(t-1))}),\\
		\vv^{(2t-1)}&=\mathcal{GS}(\gv^{(t)}\vert \vv^{(1:2(t-1))})\;.
	\end{align}
	Observe that
	\begin{equation}
		(\matr\Pi_{\vv^{(1:2(t-1))}}^\perp)^\top\gv^{(t)} \sim\mathcal {N}(\matr 0,\Id_{N-2(t-1)})
	\end{equation}
	and is independent of $\vv^{(1:2(t-1))}$. Moreover,
	\begin{align}
		\matr\Pi_{\widetilde\vv^{(1:2(t-1))}}^\perp\mathcal{GS}((\matr\Pi_{\widetilde\vv^{(1:2(t-1))}}^\perp)^\top\widetilde\vv^{(2t-1)})&\equiv \widetilde \vv^{(2t-1)},\\
		\matr\Pi_{\vv^{(1:2(t-1))}}^\perp\mathcal{GS}((\matr\Pi_{\vv^{(1:2(t-1))}}^\perp)^\top\gv^{(t)})
		&\equiv \vv^{(2t-1)}\;.
	\end{align}
	Hence, analogously to \eqref{usethis}, we obtain the representation
	\begin{align}
		\Om&=\frac{1}{N} \sum_{1\leq s\leq 2t-1}\vv^{(s)}(\widetilde\vv^{(s)})^\top+{\matr\Pi^\perp_{\vv^{(1:2t-1)}}}
		\Om_{N-2t+1}({\matr\Pi^\perp_{\widetilde\vv^{(1:2t-1)}}})^\top\;.\label{usethisend1}
	\end{align}
	Since by construction
	\begin{align}
	\widetilde\gammav^{(t)}&=\sum_{1\leq s\leq 2t-1}\langle \widetilde\vv^{(s)},\widetilde\gammav^{(t)} \rangle\widetilde\vv^{(s)},
	\end{align}
	we get from \eqref{usethisend1} that
	\begin{align}
		\Om\widetilde\gammav^{(t)}&=\sum_{1\leq s\leq 2t-1}\langle \widetilde\vv^{(s)},\widetilde\gammav^{(t)} \rangle\vv^{(s)}\;.
	\end{align}

	To obtain the $\Om$-free representation of $\Om^\top\psiv^{(t)}$, we construct
	\begin{align}
		\vv^{(2t)}&=\mathcal{GS}(\psiv^{(t)}\vert \vv^{(1:2t-1)}),\\
		\widetilde\vv^{(2t)}&=\mathcal{GS}(\widetilde\gv^{(t)}\vert \widetilde\vv^{(1:2t-1)})\;.
	\end{align}
	Analogously to \eqref{usethisend1}, we obtain
	\begin{align}
		\Om&=\frac{1}{N} \sum_{1\leq s\leq 2t}\vv^{(s)}(\widetilde\vv^{(s)})^\top+{\matr\Pi^\perp_{\vv^{(1:2t)}}}
		\Om_{N-2t}({\matr\Pi^\perp_{\widetilde\vv^{(1:2t)}}})^\top\;.\label{usethisend2}
	\end{align}
	Since by construction
	\begin{align}
		\psiv^{(t)}&=\sum_{1\leq s\leq 2t}\langle \vv^{(s)},\psiv^{(t)} \rangle\vv^{(s)}\;,
	\end{align}
	it follows from \eqref{usethisend2} that
	\begin{align}
		\Om^\top\psiv^{(t)}&=\sum_{1\leq s\leq 2t}\langle \vv^{(s)},\psiv^{(t)} \rangle \widetilde \vv^{(s)}\;.
	\end{align}

For iteration steps $t=3,4,\ldots$, the same arguments as for $t=2$ apply, completing the proof of Lemma~\ref{lemmaseq}.

\section{The Case of Dirac Measures for $\mathrm{F}_{\mathbf A}$ and/or $\mathrm{F}_{\mathbf B}$}\label{DiracFA}
\subsection{The Case Where ${\rm F}_{\Am}$ is a Dirac Measure}
We consider the case where ${\rm F}_{\Am}$ is a Dirac measure at a nonzero point, whether or not
${\rm F}_{\Bm}$ is a Dirac measure. First, by an abuse of notation, we set
\begin{equation}
\theta_N\equiv \max\left\{1,\frac{1}{\chi_N},\frac{1}{\vert z^N_{\Am,\Bm}(\chi_N)\vert}\right\}\;.
\end{equation}
For the first iteration, regardless of whether ${\rm F}_{\Am}$ or ${\rm F}_{\Bm}$ is a Dirac measure, we have
\begin{align}
\gammav^{(1)}=\sqrt{\mathcal C_{N}^{(1,1)}+\Opn{N^{-\frac 1 2}}}\,\widetilde\gv^{(1)}+\Opn{1}, \label{47}
\end{align}
where $\widetilde\gv^{(1)}\sim \mathcal N(\matr 0,\Id)$ is independent of $\mathcal C_{N}^{(1,1)}$. Here the proof of \eqref{47} is the same as that of $\gammav^{(1)}=\gammav_{\rm e}^{(1)}+\Opn{1}$ but does not invoke the final-step argument that $\widehat{{\mathcal{B}}}_N^{(1,1)}={{\mathcal{B}}}_N^{(1,1)}+\Opn{N^{-\frac 1 2}}$.

Now consider the case where ${\rm F}_{\Am}$ is a Dirac measure at a nonzero point, allowing ${\rm F}_{\Bm}$ to be either a Dirac measure or not. Since ${\rm R}'_{\Am}(s) = 0$, we have $\tau = \rho = 0$, and $\gammav^{(0)}=\sqrt{\tau}\,\widetilde\gv^{(0)}=\matr 0$. We thus obtain
\begin{align}
\frac{\vert\vv^\top\gammav^{(0)}\vert}{\sqrt N}&=0
\label{166}\\
\frac{\norm{\gammav^{(1)}-\gammav^{(0)}}}{\sqrt N}&=\sqrt{\mathcal C_{N}^{(1,1)}+\Opn{N^{-\frac 1 2}}}
= \underbrace{\sqrt{{\mathcal C_{N}^{(1,1)}}}}_{\overset{\rm a.s.}{\rightarrow}\sqrt\tau= 0}+\Opn{N^{-\frac 1 4}}.\label{167}
\end{align}
Repeating the steps in \eqref{pp1}--\eqref{pp2} with $\Opn{N^{-1/4}}$ in place of $\Opn{N^{-1/2}}$ and $p>4q+6$ completes the proof of Theorem~\ref{th1}.

For Theorem~\ref{th2} in this case, $\tau=0$, so $\sqrt{\tau/N}\,\zv=\matr 0$ for any standard Gaussian
$\zv$ independent of $(\Am,\Bm,\uv)$, and the claim reduces to
$\norm{\gammav}/\sqrt N\overset{\rm a.s.}{\to}0$. Since $\gammav^{(0)}=\sqrt\tau\,\widetilde\gv^{(0)}=\matr 0$,
Lemma~\ref{P1} and $\mathds 1_{\mathcal E_N}\gammav=\gammav$ give
\begin{align}
\frac{\norm{\gammav}}{\sqrt N}\leq
\sigma_{\max}\!\left(\mathds{1}_{\mathcal E_N}\big(\Id_N-\Om^\top\Em_A\Om\Em_B\big)^{-1}\right)
\frac{\norm{\gammav^{(1)}-\gammav^{(0)}}}{\sqrt N}\overset{\rm a.s.}{\longrightarrow}0\;,
\end{align}
where the convergence follows from \eqref{167} and \eqref{bounEAB}.

\subsection{The Case Where Only ${\rm F}_{\Bm}$ is a Dirac Measure} 
Now we consider the case that only ${\rm F}_{\Bm}$ is a Dirac measure (at a non-zero point), so that $\tau>0$. 
Note that in this case we have $\mathcal C_N^{(1,1)}\geq \frac{\phi_N(\Em_A^2)}{\chi_N^2}$. 
We therefore redefine $\theta_N$ as
\begin{equation}
\theta_N\equiv \max\left\{1,\frac{1}{\chi_N},\frac{1}{\vert z^N_{\Am,\Bm}(\chi_N)\vert},\frac{1}{\phi_N(\Em_A^2)}\right\}\;,
\end{equation}
so that $1/\mathcal C_N^{(1,1)}=\Opn{1}$. We then have from \eqref{47} 
\begin{align}
\label{169}
\gammav^{(1)}=\sqrt{\mathcal C_N^{(1,1)}+\Opn{N^{-\frac 1 2}}}\widetilde\gv^{(1)}+\Opn{1}=\sqrt{\mathcal C_N^{(1,1)}}\widetilde\gv^{(1)}+\Opn{1}\;.
\end{align}
We further verify below  that
\begin{align}
\gammav^{(2)}=\frac{{\mathcal C}_N^{(1,2)}}{\sqrt{{\mathcal C}_N^{(1,1)}}}\,\widetilde\gv^{(1)}
+\sqrt{\mathcal C_N^{(2,2)}-\frac{(\mathcal C_N^{(2,1)})^2}{\mathcal C_N^{(1,1)}}+\Opn{N^{-\frac 1 2}}}\,\widetilde\gv^{(2)}+\Opn{1},\label{exception}
\end{align}
where $\widetilde\gv^{(2)} \sim\mathcal N(\matr 0,\Id)$ is independent of $\widetilde\gv^{(1)}$ and $\mathcal C_N^{(1:2)}$. Since $\rho=0$ in this case, we have for $t=1,2$,
\[
\mathcal C_N^{(1,2)}-\mathcal C_N^{(t,t)}\overset{\rm a.s.}{\rightarrow}0 \quad \text{as} \quad N\to\infty.
\]
Applying Lemma~\ref{Gaussiian-Con} then yields
\begin{align}
\frac{\vert\vv^\top\gammav^{(1)}\vert}{\sqrt N}&=\sqrt{\mathcal C_{N}^{(1,1)}}\Op{N^{-\frac 1 2}}+ \Opn{N^{-\frac 1 2}},\\
\frac{\norm{\gammav^{(2)}-\gammav^{(1)}}}{\sqrt N}&=\sqrt{\mathcal C_{N}^{(1,1)}+\mathcal C_N^{(2,2)}-2\mathcal C_{N}^{(2,1)}+\Opn{N^{-\frac 1 2}}}\\
&=\underbrace{\sqrt{\mathcal C_{N}^{(1,1)}+\mathcal C_N^{(2,2)}-2\mathcal C_{N}^{(2,1)}}}_{\overset{\rm a.s.}{\rightarrow}\, 0}+\Opn{N^{-\frac 1 4}}\;. \label{173}
\end{align}
Repeating the steps in \eqref{pp1}--\eqref{pp2} with $\Opn{N^{-1/4}}$ in place of $\Opn{N^{-1/2}}$ and $p>4q+6$ completes the proof of Theorem~\ref{th1}.

For Theorem~\ref{th2} in this case, we take $\zv\doteq\widetilde\gv^{(1)}$, which is a standard Gaussian
vector independent of $(\Am,\Bm,\uv)$ by construction. Since $\rho=0$, no limiting dynamics
$\gammav_\infty^{(t)}$ is needed: it suffices to compare $\gammav$ with the first iterate. Indeed, from
Lemma~\ref{P1}, $\mathds 1_{\mathcal E_N}\gammav=\gammav$ and $\eqref{implication}$,
\begin{align}
\frac{\norm{\gammav-\gammav^{(1)}}}{\sqrt N}\leq
\sigma_{\max}\!\left(\mathds{1}_{\mathcal E_N}\big(\Id_N-\Om^\top\Em_A\Om\Em_B\big)^{-1}\right)
\frac{\norm{\gammav^{(2)}-\gammav^{(1)}}}{\sqrt N}
+(1-\mathds 1_{\mathcal E_N})\frac{\norm{\gammav^{(1)}}}{\sqrt N}
\overset{\rm a.s.}{\longrightarrow}0\;,
\end{align}
where the first term vanishes by \eqref{173} together with \eqref{bounEAB}. Moreover,
\eqref{169} gives
\begin{align}
\frac{\norm{\gammav^{(1)}-\sqrt{\tau}\,\zv}}{\sqrt N}
\leq\left\vert\sqrt{\mathcal C_N^{(1,1)}}-\sqrt\tau\right\vert\frac{\norm{\widetilde\gv^{(1)}}}{\sqrt N}
+\frac{\Opn{1}}{\sqrt N}\overset{\rm a.s.}{\longrightarrow}0\;,
\end{align}
since $\mathcal C_N^{(1,1)}\overset{\rm a.s.}{\to}\tau$ when $\rho=0$,
$\norm{\widetilde\gv^{(1)}}/\sqrt N\overset{\rm a.s.}{\to}1$, and the $\Opn{1}/\sqrt N$ term converges to
zero almost surely by Markov's inequality and the Borel--Cantelli lemma, as in
\eqref{pp1}--\eqref{pp2} with $p>2$. Combining the two displays proves Theorem~\ref{th2}.
\subsubsection*{Proof of Equation \eqref{exception} }
The proof proceeds through the following chain of implications $(a)\implies (b)\implies \cdots$:
\begin{align*}
\gammav^{(1)}&\overset{(a)}{=}\gammav_e^{(1)}+\Opn{1},\\
\widetilde\gammav^{(2)}&\overset{(b)}{=}\widetilde\gammav_e^{(2)}+\Opn{1},\\
\qv^{(2)}&\overset{(c)}{=}\frac{\widetilde{\mathcal C}_N^{(1,2)}}{\sqrt{\widetilde{\mathcal C}_N^{(1,1)}}}\,\gv^{(1)}+\sqrt{\widetilde{\mathcal C}_N^{(2,2)}-\frac{(\widetilde{\mathcal C}_N^{(1,2)})^2}{\widetilde{\mathcal C}_N^{(1,1)}}+\Opn{N^{-\frac 1 2}}}\,\gv^{(2)}+\Opn{1},\\
\psiv^{(2)}&\overset{(d)}{=}\frac{\widetilde{\mathcal C}_N^{(1,2)}}{\sqrt{\widetilde{\mathcal C}_N^{(1,1)}}}\Em_{A}\gv^{(1)}+\sqrt{\widetilde{\mathcal C}_N^{(2,2)}-\frac{(\widetilde{\mathcal C}_N^{(1,2)})^2}{\widetilde{\mathcal C}_N^{(1,1)}}+\Opn{N^{-\frac 1 2}}}\Em_{A}\gv^{(2)}+\Opn{1},\\
\langle\gv^{(s)},\psiv^{(2)} \rangle&\overset{(e)}{=}\Opn{N^{-\frac 1 2}}, \quad s=1,2, \\
\deltav_\gamma^{(2)}&\overset{(f)}{=}\Opn{1}, \\
\psiv_\perp^{(1:2)}&\overset{(f)}{=}\psiv^{(1:2)}+\Opn{1},\\
\widehat{\mathcal C}_N^{(1:2)}&\overset{(g)}{=}{\mathcal C}_N^{(1:2)}+\Opn{N^{-\frac 1 2}}\;.
\end{align*}
We now recall that $1/\mathcal C_N^{(1,1)}=\Opn{1}$ and $\mathcal C_N^{(1:2)}=\Opn{1}$. Hence $(g)$ together with the arithmetic properties of $\Opn{\kappa}$ in \eqref{arit} give
\begin{align}
\widehat{\mathcal B}_N^{(1,2)}=\frac{{\mathcal C}_N^{(1,2)}}{\sqrt{{\mathcal C}_N^{(1,1)}}}+\Opn{N^{-\frac 1 2}},
\qquad
\widehat{\mathcal B}_N^{(2,2)}=\sqrt{\mathcal C_N^{(2,2)}-\frac{(\mathcal C_N^{(2,1)})^2}{\mathcal C_N^{(1,1)}}+\Opn{N^{-\frac 1 2}}}\;. 
\end{align}
Substituting these two coefficients and $\deltav_\gamma^{(2)}=\Opn{1}$ from $(f)$
into \eqref{phiv} with $t=2$, and recalling $\norm{\widetilde\gv^{(s)}}=\Opn{\sqrt N}$,
yields \eqref{exception}.
\bibliographystyle{IEEEtran}
\bibliography{report}
\end{document}